\documentclass[12pt]{article}
\usepackage{fontspec, xunicode, xltxtra}  
\usepackage[slantfont,boldfont]{xeCJK}
\usepackage{mathtools}
\usepackage{amsmath,diagbox}
\usepackage{indentfirst}
\usepackage{mathrsfs}
\usepackage{amsfonts}
\usepackage{arydshln}
\usepackage{enumerate}
\usepackage{setspace}
\usepackage{amssymb,amsthm,cases}
\usepackage{amsbsy,pifont}
\usepackage{latexsym,diagbox,tabularx}
\usepackage{amsmath,amscd,bbm,multirow}
\usepackage{graphicx,epsfig,extarrows,dsfont,mathtools}
\usepackage[final,allcolors=blue,colorlinks=true]{hyperref}
\usepackage[normalem]{ulem}
\usepackage{cite}

\input xy

\usepackage{caption,subcaption} 
\usepackage{tikz}
\usepackage{tikz-cd}
\usetikzlibrary{decorations.pathreplacing,decorations.markings}
\usetikzlibrary{cd}
\usetikzlibrary{shapes.geometric}
\usetikzlibrary{arrows,arrows.meta}
\usetikzlibrary{graphs,positioning}
\usetikzlibrary{matrix,decorations.pathmorphing}
\usetikzlibrary{math,calc,intersections,through,angles,arrows.meta,shapes.geometric,shadows,quotes,spy,datavisualization,datavisualization.formats.functions,plotmarks}
\tikzset{every picture/.style={samples=300,smooth,line join=round,thick,>=stealth}}
\usepackage{enumerate}
\usepackage{fancyhdr}
\usepackage[sectionbib]{chapterbib}

\tikzset{
	on each segment/.style={
		decorate,
		decoration={
			show path construction,
			moveto code={},
			lineto code={
				\path [#1]
				(\tikzinputsegmentfirst) -- (\tikzinputsegmentlast);
			},
			curveto code={
				\path [#1] (\tikzinputsegmentfirst)
				.. controls
				(\tikzinputsegmentsupporta) and (\tikzinputsegmentsupportb)
				..
				(\tikzinputsegmentlast);
			},
			closepath code={
				\path [#1]
				(\tikzinputsegmentfirst) -- (\tikzinputsegmentlast);
			},
		},
	},
	mid arrow/.style={postaction={decorate,decoration={
				markings,
				mark=at position .5 with {\arrow[#1]{stealth}}
	}}},
}

\numberwithin{equation}{section}

\theoremstyle{plain}
\newtheorem{theorem}{Theorem}[section]

\newtheorem{corollary}[theorem]{Corollary}

\newtheorem{lemma}[theorem]{Lemma}
\newtheorem{proposition}[theorem]{Proposition}

\theoremstyle{definition}

\newtheorem{remark}[theorem]{Remark}

\def\XXint#1#2#3{{\setbox0=\hbox{$#1{#2#3}{\int}$}
		\vcenter{\hbox{$#2#3$}}\kern-.5\wd0}}

\DeclareMathSymbol{\subseteq}{\mathrel}{symbols}{"12}
\DeclareMathSymbol{\supseteq}{\mathrel}{symbols}{"13} 
\DeclareMathSymbol{\subsetneq}{\mathrel}{AMSb}{"28}                  \DeclareMathSymbol{\supsetneq}{\mathrel}{AMSb}{"29}    
          
\DeclareMathSymbol{\nsubseteq}{\mathrel}{AMSb}{"2A}                  

\DeclareMathSymbol{\nsupseteq}{\mathrel}{AMSb}{"2B}

\DeclareMathDelimiter{\langle}{\mathop}{symbols}{"68}{largesymbols}{"0A}
\DeclareMathDelimiter{\rangle}{\mathclose}{symbols}{"69}{largesymbols}{"0B}

\DeclareSymbolFont{txfontsA}{U}{txmia}{m}{it}
\SetSymbolFont{txfontsA}{bold}{U}{txmia}{bx}{it}
\DeclareFontSubstitution{U}{txmia}{m}{it}
\DeclareMathSymbol{\upalpha}{\mathord}{txfontsA}{"0B}
\DeclareMathSymbol{\upbeta}{\mathord}{txfontsA}{"0C}
\DeclareMathSymbol{\upgamma}{\mathord}{txfontsA}{"0D}
\DeclareMathSymbol{\updelta}{\mathord}{txfontsA}{"0E}
\DeclareMathSymbol{\upepsilon}{\mathord}{txfontsA}{"0F}
\DeclareMathSymbol{\upzeta}{\mathord}{txfontsA}{"10}
\DeclareMathSymbol{\upeta}{\mathord}{txfontsA}{"11}
\DeclareMathSymbol{\uptheta}{\mathord}{txfontsA}{"12}
\DeclareMathSymbol{\upiota}{\mathord}{txfontsA}{"13}
\DeclareMathSymbol{\upkappa}{\mathord}{txfontsA}{"14}
\DeclareMathSymbol{\uplambda}{\mathord}{txfontsA}{"15}
\DeclareMathSymbol{\upmu}{\mathord}{txfontsA}{"16}
\DeclareMathSymbol{\upnu}{\mathord}{txfontsA}{"17}
\DeclareMathSymbol{\upxi}{\mathord}{txfontsA}{"18}
\DeclareMathSymbol{\uppi}{\mathord}{txfontsA}{"19}
\DeclareMathSymbol{\uprho}{\mathord}{txfontsA}{"1A}
\DeclareMathSymbol{\upsigma}{\mathord}{txfontsA}{"1B}
\DeclareMathSymbol{\uptau}{\mathord}{txfontsA}{"1C}
\DeclareMathSymbol{\upupsilon}{\mathord}{txfontsA}{"1D}
\DeclareMathSymbol{\upphi}{\mathord}{txfontsA}{"1E}
\DeclareMathSymbol{\upchi}{\mathord}{txfontsA}{"1F}
\DeclareMathSymbol{\uppsi}{\mathord}{txfontsA}{"20}
\DeclareMathSymbol{\upomega}{\mathord}{txfontsA}{"21}
\DeclareMathSymbol{\upvarepsilon}{\mathord}{txfontsA}{"22}
\DeclareMathSymbol{\upvartheta}{\mathord}{txfontsA}{"23}
\DeclareMathSymbol{\upvarpi}{\mathord}{txfontsA}{"24}
\DeclareMathSymbol{\upvarrho}{\mathord}{txfontsA}{"25}
\DeclareMathSymbol{\upvarsigma}{\mathord}{txfontsA}{"26}
\DeclareMathSymbol{\upvarphi}{\mathord}{txfontsA}{"27}                   

\DeclareSymbolFont{ugmL}{OMX}{mdugm}{m}{n}
\SetSymbolFont{ugmL}{bold}{OMX}{mdugm}{b}{n}
\DeclareMathAccent{\wideparen}{\mathord}{ugmL}{"F3}

\def\pt{\partial}
\def\mc{\mathcal}
\def\ra{\rightarrow}

\def\s{\subseteq}

\def\e{\epsilon}

\def\ol{\overline}
\def\ul{\underline}

\def\vp{\varphi}

\def\mf{\mathfrak}
\def\bf{\textbf}
\def\pt{\partial}

\def\om{\omega}
\def\Om{\Omega}
\def\la{\lambda}
\def\al{\alpha}
\def\be{\beta}
\def\de{\delta}

\def\Ga{\Gamma}

\def\ts{\times}

\def\iy{\infty}

\def\f{\frac}

\def\df{\mathrm d}

\def\hra{\hookrightarrow}

\def\diag{{\rm diag}}

\def\mcH{\mathcal{H}}

\def\mcL{\mathcal{L}}

\def\mcS{\mathcal{S}}

\def\mcA{\mathcal{A}}

	\def\Ee{\mathbbm{1}}

	\DeclareMathOperator{\Div}{div}

	\newcommand{\N}{\mathbb N}

\begin{document} 	
\title{Optimal damping design and exponential stabilization for weakly degenerate wave equations}
\author{Mengze Gu$^{a}$, Dong-Hui Yang$^b$\footnote{Corresponding author: donghyang@outlook.com}, Hongli Sun$^c$, Yuanzhi Zhou$^b$
	\\
	$^a${\it School of Mathematics and Science, Changsha Normal University}\\ {\it  Changsha 410083, China}\\
	$^b${\it School of Mathematics and Statistics, Central South University}\\
	{\it Changsha 410075, P.R.China}\\
	$^c${\it School of Mathematical and Physical Sciences}\\
	{\it Chongqing University of Science and Technology,	Chongqing 401331, China}\\  }
\date{}

\maketitle{}
\thispagestyle{empty}

\begin{abstract}
We investigate the optimal damping design and exponential stabilization for a class of higher‑dimensional weakly degenerate damped wave equations. In weighted Sobolev spaces, we establish well‑posedness and uniform exponential energy decay, and verify the existence of long‑time optimal damping potentials. To overcome analytical difficulties arising from system non-self-adjointness, we use finite‑time approximation via semigroup operator norms. Through linearization and adjoint‑state analysis, we derive first-order necessary optimality conditions and show finite‑time optimal dampings possess a bang‑bang structure.
\vspace{0.3cm}

\noindent {\bf {Keywords:}}  Degenerate Schr\"odinger equation; Interior degeneracy point; Weighted Sobolev space; Observability inequality

\vspace{0.3cm}

\noindent {\bf {AMS subject classifications (2020):}}~ 35J70, 35K65, 49Q10, 93B07.

\end{abstract}
	
\section{Introduction}

In this paper, we consider the following degenerate damped wave equation:
\begin{equation}\label{08.12.1}
	\begin{cases}
		\partial_{tt}\varphi-\Div(A\nabla\varphi)
		+V(x)\partial_t\varphi=0,
		&\mbox{in }\Omega\times(0,+\infty),\\
		\varphi=0,
		&\mbox{on }\partial\Omega\times(0,+\infty),\\
		\varphi(0)=\varphi^0,\quad
		\partial_t\varphi(0)=\varphi^1,
		&\mbox{in }\Omega.
	\end{cases}
\end{equation}
Here $\Omega\subset\overline{\mathbb R_+^N}$ is a bounded $C^2$ domain, and $
\Gamma_N^0=\partial\Omega\cap\{x_N=0\}$ 
is a nonempty relatively open subset of $\partial\Omega$. The diffusion matrix is given by,
\begin{equation*}
	A=\diag(\underbrace{1,\ldots,1}_{N-1},w),
	\quad
	w=x_N^\alpha,
	\quad
	\alpha\in(0,1).
\end{equation*}
Thus, the operator degenerates on the boundary portion $\Gamma_N^0$. The corresponding weighted Sobolev space $H_0^1(\Omega;w)$ will be introduced in Section~\ref{S2}. For $u\in H_0^1(\Omega;w)$, we define the degenerate elliptic operator,
\begin{equation*}
	\mathcal A u=-\Div(A\nabla u).
\end{equation*}
The initial data are assumed to satisfy,
$$
\varphi^0\in H_0^1(\Omega;w),
\quad
\varphi^1\in L^2(\Omega).
$$
The function $V$ represents the damping potential in \eqref{08.12.1}. Given constants
$0<H<K$ and $\beta\in(H,K)$, we introduce the admissible class,
\begin{equation}\label{08.24.2}
	\mathcal S(H,K;\beta)
	=
	\left\{
		W\in L^\infty(\Omega)\colon 
		H\leq W\leq K\ \mbox{a.e.~in }\Omega,\ 
		\int_\Omega W\mathrm dx=\beta|\Omega|
		\right\}.
\end{equation}
The constraint on the average of the damping potential represents a fixed total amount of damping, while the pointwise bounds prevent the damping from becoming arbitrarily large or vanishing.

For each $V\in\mathcal S(H,K;\beta)$, let
$$
\Phi(t)=
\begin{pmatrix}
	\varphi(t)\\
	\partial_t\varphi(t)
\end{pmatrix}
$$
denote the corresponding state. We are interested in the exponential decay rate of the associated semigroup. More precisely, for a fixed constant $C_0>1$, we define
\begin{equation}\label{08.24.6}
\omega(V) = \sup\left\{	\omega>0\colon 	\|\Phi(t)\|_{\mathcal H}	\leq	C_0e^{-\omega t}\|\Phi(0)\|_{\mathcal H}, 	\mbox{ for all }t\geq0	\mbox{ and all }\Phi(0)\in\mathcal H	\right\},
\end{equation}
where $\mathcal H=H_0^1(\Omega;w)\times L^2(\Omega)$.  The use of a fixed constant $C_0$ is important in the optimization problem, since it allows the decay estimate to be compared uniformly for different admissible damping potentials.

The main objective of this paper is to determine an optimal damping potential which maximizes the exponential decay rate. Namely, we consider the optimization problem
\begin{equation}\label{08.24.1}
	\sup_{V\in\mathcal S(H,K;\beta)}\omega(V).
\end{equation}

Stabilization of hyperbolic equations and its close relation to controllability have been extensively studied for uniformly hyperbolic systems \cite{Alabau,Chen,Chen1,Coron,Fragnelli,Komornik,Lasiecka,Lasiecka1,Lions,Rousseau,Russell,Triggiani,Weiss,Zuazua}. In contrast, existing results for degenerate hyperbolic equations are restricted to the one-dimensional setting  \cite{Alabau,Fragnelli,Zhang}. Extending these results to higher dimensions introduces additional technical difficulties \cite{Yang,Yang2,Yang1}, including the analysis of conormal derivatives, justification of integration-by-parts, and the establishment of adapted Poincar\'e and Hardy-type inequalities.

On the other hand, optimal damping design can be viewed as a shape optimization problem since it depends on the domain geometry and the structure of the solution \cite{Buttazzo,Chenais,Greco,Guo2,Guo1,Guo,He,Henrot, Tiba, Privat,Yang1}. However, optimizing the exponential decay rate \eqref{08.24.6} is considerably more delicate. In fact, the evolution operator associated with \eqref{08.12.1} is non-self-adjoint, and its spectrum alone does not determine the decay rate, rendering standard spectral optimization inapplicable.

This work provides the first optimal damping design for the exponential stabilization of higher-dimensional weakly degenerate wave equations. After establishing system well-posedness, we prove the existence of an optimal damping potential for \eqref{08.24.1} by balancing the weak-star topology of $L^\infty(\Omega)$ on the space of admissible potentials with the long-time behavior of the non-self-adjoint system. However, directly characterizing the optimal potential $V^*$ via $\omega(V)$ presents severe analytical obstacles (see Section \ref{S4}, Remarks \ref{08.28.R2} and \ref{08.28.R1}). To overcome this, we introduce an auxiliary finite-time optimization problem. Under a natural attainability assumption, Theorem \ref{08.29.T2} yields a bang-bang characterization, uncovering key structural properties of optimal damping distributions without requiring an explicit solution for \eqref{08.24.1}.

The main results of this paper are established in three principal theorems. Theorem \ref{08.25.T1} proves the existence of a long-time optimal damping potential. Theorems \ref{08.29.T1} and \ref{08.29.T2} establish key structural properties and a bang-bang characterization for the finite-time optimal damping. The paper is organized as follows. In Section \ref{S2}, we introduce the weighted energy space, analyze the spectral properties of $\mathcal A$, and establish system well-posedness and uniform exponential stability. Section \ref{S3} is devoted to proving the weak-star upper semicontinuity of the decay rate $\omega(V)$ to deduce the existence of an optimal potential. Finally, Section \ref{S4} introduces the auxiliary finite-time optimization problem and derives the optimality conditions leading to the bang-bang structure.

\section{Preliminaries}\label{S2}
In this section, we establish the preliminary results needed in the sequel. In particular, we introduce the energy space, study the spectral properties of the operator $\mathcal A$, establish the associated semigroup, and prove the exponential decay of solutions.

\subsection{Energy spaces}

Define
\begin{equation*}
	H^1(\Om;w)=\left\{u\in L^2(\Om)\colon \int_\Om \nabla u\cdot A\nabla u\df x<+\iy\right\}, 
\end{equation*}
where the derivatives are understood in the weak sense. 
Its inner product and norm are defined by 
\begin{equation*}
	(u,v)_{H^1(\Om;w)}=\int_\Om \left(uv+\nabla u\cdot A\nabla v\right)\df x, \quad \|u\|_{H^1(\Om;w)}=(u,u)_{H^1(\Om;w)}^\f{1}{2}. 
\end{equation*}
Set
\begin{equation*}
	H_0^1(\Om;w)=\mbox{the closure of $C_0^\iy(\Om)$ in } H^1(\Om;w). 
\end{equation*}
We denote by $H^{-1}(\Om;w)$  the dual space of $H_0^1(\Om;w)$ with $L^2(\Om)$ as the pivot space. 

Denote 
\begin{equation*}
	D(\mcA)=H_0^1(\Om;w)\cap \{u\colon \mcA u\in L^2(\Om)\}. 
\end{equation*}
Then 
\begin{equation*}
	(\mcA u, v)_{L^2(\Om)}=(u,v)_{H_0^1(\Om;w)}, \mbox{ for all } u\in D(\mcA) \mbox{ and all } v\in H_0^1(\Om;w). 
\end{equation*}

Throughout this paper, we denote 
\begin{equation}\label{08.23.1}
	x=(x',x_N),\quad M=\sup_{x\in\Om}|x|. 
\end{equation}

The following lemma is a Hardy's inequality. 

\begin{lemma}\label{08.23.L1}
	For each $u\in H_0^1(\Om;w)$, we have 
	\begin{equation*}
		\int_\Om x_N^{\al-2}u^2\df x\leq \f{4}{(1-\al)^2}\int_\Om x_N^\al (\pt_{x_N}u)^2\df x. 
	\end{equation*}
	
\end{lemma}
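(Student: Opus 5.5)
By density of $C_0^\infty(\Omega)$ in $H_0^1(\Omega;w)$, it suffices to prove the inequality for $u\in C_0^\infty(\Omega)$ and then pass to the limit. Each such $u$ vanishes near $\partial\Omega$, hence in a neighborhood of $\{x_N=0\}\cap\overline\Omega$. Extending $u$ by zero, we may view it as a smooth compactly supported function on $\{x_N>0\}$. The plan is a one-dimensional Hardy inequality in the variable $x_N$ for each fixed $x'$, followed by integration in $x'$.

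For fixed $x'$, let $f(s)=u(x',s)$, a smooth function with compact support in $(0,\infty)$. Write
\begin{equation*}
s^{\alpha-2}=\frac{1}{\alpha-1}\frac{d}{ds}s^{\alpha-1},
\end{equation*}
noting that $\alpha-1<0$. Integrating by parts produces no boundary terms, because $f$ vanishes near $0$ and for large $s$. This gives
\begin{equation*}
\int_0^\infty s^{\alpha-2}f^2\,ds=-\frac{2}{\alpha-1}\int_0^\infty s^{\alpha-1}ff'\,ds=\frac{2}{1-\alpha}\int_0^\infty s^{\alpha-1}ff'\,ds.
\end{equation*}
Next split $s^{\alpha-1}=s^{(\alpha-2)/2}s^{\alpha/2}$ and apply Cauchy--Schwarz:
\begin{equation*}
\int_0^\infty s^{\alpha-2}f^2\,ds\le\frac{2}{1-\alpha}\Big(\int_0^\infty s^{\alpha-2}f^2\,ds\Big)^{1/2}\Big(\int_0^\infty s^{\alpha}(f')^2\,ds\Big)^{1/2}.
\end{equation*}
The left-hand side is finite, so we may divide by its square root (the case where it is zero is trivial). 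Squaring then yields
\begin{equation*}
\int_0^\infty s^{\alpha-2}f^2\,ds\le\frac{4}{(1-\alpha)^2}\int_0^\infty s^\alpha(f')^2\,ds.
\end{equation*}
Integrating this over $x'$ gives the claimed inequality for smooth $u$. One could instead apply the divergence theorem directly in $\Omega$ to the vector field $x_N^{\alpha-1}u^2e_N$; this is equivalent.

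The main obstacle is the density step. Take $u_n\in C_0^\infty(\Omega)$ with $u_n\to u$ in $H^1(\Omega;w)$. Then $x_N^{\alpha/2}\partial_{x_N}u_n\to x_N^{\alpha/2}\partial_{x_N}u$ in $L^2$, so the right-hand sides converge. For the left-hand side, pass to a subsequence with $u_n\to u$ almost everywhere and apply Fatou's lemma:
\begin{equation*}
\int_\Omega x_N^{\alpha-2}u^2\,dx\le\liminf_{n\to\infty}\int_\Omega x_N^{\alpha-2}u_n^2\,dx\le\frac{4}{(1-\alpha)^2}\int_\Omega x_N^\alpha(\partial_{x_N}u)^2\,dx.
\end{equation*}
No finiteness assumption on the weighted $L^2$ norm of $u$ is needed in advance, since Fatou's lemma provides it.
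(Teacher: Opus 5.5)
Your proof is correct and reaches the same sharp constant, but by a different route from the paper. The paper writes $u(x',x_N)=\int_0^{x_N}\partial_{x_N}u(x',s)\,ds$ and applies Cauchy--Schwarz with auxiliary weights $s^{\beta}$ and $s^{-\beta}$ for a free exponent $\beta\in(\alpha,1)$. It then exchanges the order of integration and uses $\int_s^M x_N^{\alpha-\beta-1}\,dx_N\le \frac{s^{\alpha-\beta}}{\beta-\alpha}$. This gives the constant $\frac{1}{(1-\beta)(\beta-\alpha)}$, which is minimized at $\beta=\frac{1+\alpha}{2}$ and yields $\frac{4}{(1-\alpha)^2}$.

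You instead integrate by parts against $s^{\alpha-2}=\frac{1}{\alpha-1}\frac{d}{ds}s^{\alpha-1}$, apply Cauchy--Schwarz once, and absorb the factor $\bigl(\int s^{\alpha-2}f^2\,ds\bigr)^{1/2}$ into the left-hand side. Your route is shorter and gives the optimal constant with no parameter to tune. The cost is that absorption needs the left-hand side to be finite in advance, which is why you must run it on compactly supported smooth functions. The paper's Fubini argument bounds the left-hand side directly by the right-hand side, so it never needs that a priori finiteness.

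On the density step, your Fatou argument makes explicit what the paper only asserts with ``since this space is dense.'' You take an a.e.\ convergent subsequence from the $L^2$ convergence and use the convergence of $\int x_N^{\alpha}(\partial_{x_N}u_n)^2$. Your slice-by-slice treatment over all $x'$, after extension by zero, also avoids the paper's restriction of $x'$ to $(0,M)^{N-1}$. That restriction is inaccurate, since $x'$ may have negative components.
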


\begin{proof}
It is sufficient to prove the result for $u\in C_0^\iy(\Om)$, since this space is dense in $H_0^1(\Om;w)$. 
	
Indeed, for each $\be\in (\al,1)$, from 
\begin{equation*}
\int_s^M x_N^{\al-\be-1}\df x_N=\f{1}{\al-\be}\left(M^{\al-\be}-s^{\al-\be}\right)\leq \f{1}{\be-\al}s^{\al-\be}, 
\end{equation*}
we get 
\begin{equation*}
	\begin{split}
		\int_\Om x_N^{\al-2}u^2\df x
		&=\int_{(0,M)^{N-1}}\int_0^M x_N^{\al-2}|u(x',x_N)-u(x',0)|^2\df x_N\df x'\\
		&=\int_{(0,M)^{N-1}}\int_0^Mx_N^{\al-2}\left(\int_0^{x_N}\f{\pt u}{\pt x_N}(x',s)\df s\right)^2\df x_N\df x'\\
		&\leq \int_{(0,M)^{N-1}}\int_0^M x_N^{\al-2}\left(\int_0^{x_N} s^\be \left|\f{\pt u}{\pt x_N}(x',s)\right|^2\df s\right)\left(\int_0^{x_N}s^{-\be}\df s\right)\df x_N\df x'\\
		&=\f{1}{1-\be}\int_{(0,M)^{N-1}}\int_0^M\int_0^{x_N} x_N^{\al-\be-1}s^{\be}\left|\f{\pt u}{\pt x_N}(x',s)\right|^2\df s\df x_N\df x'\\	
		& =\f{1}{1-\be}\int_{(0,M)^{N-1}}\int_0^M\int_s^{M} x_N^{\al-\be-1}s^{\be}\left|\f{\pt u}{\pt x_N}(x',s)\right|^2\df x_N\df s\df x'\\
		&\leq \f{1}{(1-\be)(\be-\al)}\int_{(0,M)^{N-1}}\int_0^M s^\al \left|\f{\pt u}{\pt x_N}(x',s)\right|^2\df s\df x'. 
	\end{split}
\end{equation*}
Choosing $\be=\f{1+\al}{2}$, we obtain 
\begin{equation*}
	\int_\Om x_N^{\al-2}u^2\df x\leq \f{4}{(1-\al)^2}\int_\Om x_N^\al \left|\f{\pt u}{\pt x_N}\right|^2\df x. 
\end{equation*}
We complete the proof of this lemma. 
\end{proof}

\begin{remark}\label{08.23.R1}
	From Lemma \ref{08.23.L1}, we obtain that
	\begin{equation}\label{08.23.2}
		\int_\Omega x_N^{\alpha-2}u^2\df x
		\leq
		\frac{4}{(1-\alpha)^2}
		\int_\Omega \nabla u\cdot A\nabla u\df x .
	\end{equation}
	Moreover, since $0<x_N\leq M$ in $\Omega$, we have
	\begin{equation}\label{08.23.3}
		\int_\Omega u^2\df x
		=
		\int_\Omega x_N^{2-\alpha}x_N^{\alpha-2}u^2\df x
		\leq
		M^{2-\alpha}
		\int_\Omega x_N^{\alpha-2}u^2\df x
		\leq
		\frac{4M^{2-\alpha}}{(1-\alpha)^2}
		\int_\Omega \nabla u\cdot A\nabla u\df x .
	\end{equation}
	Hence, we obtain the Poincar\'e inequality. Denote
	\begin{equation}\label{08.24.3}
		C_P=\frac{4M^{2-\alpha}}{(1-\alpha)^2}. 
	\end{equation} Consequently,
	\begin{equation*}
		\|u\|_{H_0^1(\Omega;w)}
		:=
		\left(
		\int_\Omega \nabla u\cdot A\nabla u\df x
		\right)^{\f{1}{2}}
	\end{equation*}
	defines an equivalent norm on $H_0^1(\Omega;w)$, which will be used throughout this paper. Throughout this paper, we denote by
	\begin{equation*}
		a(u,v):=\int_\Omega \nabla u\cdot A\nabla v\,\mathrm dx, \mbox{ for all } 
		u,v\in H_0^1(\Omega;w),
	\end{equation*}
	the bilinear form associated with the degenerate elliptic operator
	$\mathcal A$.
\end{remark}

\begin{lemma}\label{08.23.L2}
	The embedding $H_0^1(\Om;w)\hra L^2(\Om)$ is compact. 
\end{lemma}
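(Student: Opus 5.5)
We argue by sequential compactness. Take a bounded sequence $(u_n)$ in $H_0^1(\Om;w)$, say $\|u_n\|_{H_0^1(\Om;w)}\le 1$, and show it has a subsequence converging in $L^2(\Om)$. By reflexivity, after extracting a subsequence, $u_n\rightharpoonup u$ weakly in $H_0^1(\Om;w)$. Since $L^2$-convergence is what we need, it suffices to show that $(u_n)$ is Cauchy in $L^2(\Om)$ along a further subsequence. We split $\Om$ into a thin layer near the degenerate boundary $\Gamma_N^0$ and its complement:
\begin{equation*}
\Om_\de=\Om\cap\{x_N<\de\},\qquad \Om^\de=\Om\cap\{x_N>\de/2\}.
\end{equation*}

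\textbf{Near the degeneracy.} We use the Hardy inequality of Remark \ref{08.23.R1}. Exactly as in \eqref{08.23.3}, but with $x_N\le\de$ on $\Om_\de$,
\begin{equation*}
\int_{\Om_\de}u_n^2\df x\le \de^{2-\al}\int_{\Om}x_N^{\al-2}u_n^2\df x\le \f{4\de^{2-\al}}{(1-\al)^2}.
\end{equation*}
Since $2-\al>0$, this is uniformly small in $n$ as $\de\to0$. This is the crucial step: the $L^2$ mass of the whole sequence cannot concentrate at $\{x_N=0\}$.

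\textbf{Away from the degeneracy.} On $\Om^\de$ the weight satisfies $w=x_N^\al\ge(\de/2)^\al$, so $\int_{\Om^\de}|\nabla u_n|^2\df x\le (2/\de)^\al$. Hence $(u_n)$ is bounded in $H^1$ on the open set $\Om^\de$. Boundedness alone is not enough, because $\Om^\de$ need not be Lipschitz. To get around this, extend $u_n$ by zero outside $\Om$; this is allowed because $u_n\in H_0^1(\Om;w)$ is a limit of $C_0^\iy(\Om)$ functions. Then multiply by a smooth cutoff $\chi_\de(x_N)$ that equals $1$ for $x_N\ge\de$ and vanishes for $x_N\le\de/2$. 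The functions $\chi_\de u_n$ are bounded in $H^1_0(B)$ for a large ball $B$ (the cutoff's derivative contributes at most $C\de^{-1}\|u_n\|_{L^2}$, which is bounded by \eqref{08.23.3}). The Rellich--Kondrachov theorem then gives a subsequence with $\chi_\de u_n$ convergent in $L^2$, and in particular $u_n$ convergent in $L^2(\Om\cap\{x_N>\de\})$.

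\textbf{Conclusion.} Take $\de=1/k$ and use a diagonal extraction to obtain one subsequence, still denoted $u_n$, that converges in $L^2(\Om\cap\{x_N>1/k\})$ for every $k$. Then
\begin{equation*}
\limsup_{n,m}\|u_n-u_m\|_{L^2(\Om)}^2\le \limsup_{n,m}\int_{\Om_{1/k}}|u_n-u_m|^2\df x\le \f{16\,k^{\al-2}}{(1-\al)^2}\quad\text{for every }k,
\end{equation*}
so the subsequence is Cauchy in $L^2(\Om)$ and converges, necessarily to $u$. This proves compactness.

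The main obstacle is the uniform smallness near $\{x_N=0\}$, which the Hardy inequality handles. The second point requiring care is the cutoff/zero-extension step, which avoids any regularity assumption on the truncated domain.
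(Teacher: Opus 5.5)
Your proof is correct and follows the same strategy as the paper: the Hardy inequality gives the uniform bound $\int_{\Omega\cap\{x_N<\delta\}}u_n^2\,\mathrm{d}x\le C\delta^{2-\alpha}$ near the degenerate boundary, and classical Rellich compactness handles the region $\{x_N>\delta\}$, where the weight is bounded below. Your cutoff and zero-extension step is more careful than the paper, which simply asserts compactness of $H^1(\Omega_{\delta_0})\hookrightarrow L^2(\Omega_{\delta_0})$ on a truncated domain whose regularity it never checks; your diagonal Cauchy argument replaces the paper's reduction to weak limit $0$ and reaches the same conclusion.
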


\begin{proof}
	Let $\{u_n\}_{n\in\N^*}\s H_0^1(\Om;w)$ be a bounded sequence. i.e., $\|u_n\|_{H_0^1(\Om;w)}^2\leq C_1$ for some positive constant $C_1$. Then there exists a subsequence of $\{u_n\}_{n\in\N^*}$, still denoted by itself, and $u_0\in H_0^1(\Om;w)$, such that 
	\begin{equation}\label{08.23.4}
		u_n\ra u_0 \mbox{ weakly in } H_0^1(\Om;w). 
	\end{equation}
	Replacing $u_n-u_0$ by $u_n$, we only need to prove the case $u_0=0$.

	Fix $\de>0$. Denote $\Om_\de=\Om\cap \{x_N>\de\}$, then, from \eqref{08.23.3}, we get 
	\begin{equation*}
		\int_{\Om-\Om_\de}u_n^2\df x=\int_{\Om-\Om_\de} x_N^{2-\al}x_N^{\al-2}u_n^2\df x\leq  \f{4\de^{2-\al}}{(1-\al)^2}\int_{\Om}\nabla u_n\cdot A\nabla u_n\df x\leq \f{4\de^{2-\al}C_1}{(1-\al)^2}. 
	\end{equation*}
	Let $\e>0$ be arbitrary. Choosing $\de_0>0$ small enough such that $\f{4\de_0^{2-\al}C_1}{(1-\al)^2}<\f{1}{4}\e^2$, then
	\begin{equation}\label{08.23.5}
		\|u_n|_{\Om-\Om_{\de_0}}\|_{L^2(\Om-\Om_{\de_0})}<\f{1}{2}\e. 
	\end{equation} 
	
	Now, since $H^1(\Om_{\de_0};w)= H^1(\Om_{\de_0})$ and $H^1(\Om_{\de_0})\hra L^2(\Om_{\de_0})$ is compact, from \eqref{08.23.4}, there exists a subsequence of $\{u_n|_{\Om_{\de_0}}\}_{n\in\N^*}$, still denoted by itself, such that 
	\begin{equation*}
		u_n|_{\Om_{\de_0}}\ra 0 \mbox{ strongly in } L^2(\Om_{\de_0})\mbox{ as } n\ra\iy. 
	\end{equation*}
	Choosing $n_0=n(\de_0,\e)\in\N^*$ such that for all $n \ge n_0$, $\|u_{n}\|_{L^2(\Omega_{\de_0})} < \frac 12 \epsilon$. Hence, for all $n \ge n_0$,
	\begin{equation*}
		\|u_{n}\|_{L^2(\Om)}\leq \|u_{n}|_{\Om-\Om_{\de_0}}\|_{L^2(\Om-\Om_{\de_0})}+\|u_{n}|_{\Om_{\de_0}}\|_{L^2(\Om_{\de_0})}<\e. 
	\end{equation*}
This completes the proof.
\end{proof}

\subsection{Spectrum}

For the partial differential operator 
\begin{equation}\label{08.25.11}
	\mcA: D(\mcA)\s L^2(\Om)\ra L^2(\Om), 
\end{equation}
from Lemma \ref{08.23.L2}, there exists a discrete point spectrum 
\begin{equation*}
	0<\la_1\leq \la_2\leq \la_3\leq \cdots, \quad \la_n\ra \iy \mbox{ as } n\ra\iy. 
\end{equation*}
Let the function $\phi_n$ denote the $n$th normalized eigenfunction of the operator $\mcA$ corresponding to the  $n$th eigenvalue $\la_n$. i.e., 
\begin{equation}\label{08.25.8}
	\begin{cases}
		\mcA \phi_n=\la_n\phi_n, &\mbox{in }\Om, \\
		\phi_n=0, &\mbox{on }\pt\Om. 
	\end{cases}
\end{equation}
Then $\{\phi_n\}_{n\in\N^*}$ is an orthonormal basis of $L^2(\Om)$. Note that from Remark \ref{08.23.R1}, we obtain 
\begin{equation*}
	\la_1\geq C_P^{-1}>0. 
\end{equation*}

Let $\theta\geq0$. Define
\begin{equation}\label{08.25.9}
D(\mcA^\theta)
=
\left\{
u\in L^2(\Om)\colon 
\sum_{n=1}^{\infty}
\la_n^{2\theta}
|(u,\phi_n)_{L^2(\Om)}|^2
<+\infty
\right\}.
\end{equation}
The inner product and the associated norm are defined by
\begin{equation*}
(u,v)_{D(\mcA^\theta)}
=
\sum_{n=1}^{\infty}
\la_n^{2\theta}
(u,\phi_n)_{L^2(\Om)}
(v,\phi_n)_{L^2(\Om)},
\end{equation*}
and
\begin{equation*}
\|u\|_{D(\mcA^\theta)}
=
\left(
\sum_{n=1}^{\infty}
\la_n^{2\theta}
|(u,\phi_n)_{L^2(\Om)}|^2
\right)^{\f{1}{2}}.
\end{equation*}
Then, by the spectral theorem for positive self-adjoint operators with
compact resolvent, 
$D(\mcA^\theta)$ endowed with the above inner product is a Hilbert space.
In particular,
$(D(\mcA^\theta),\|\cdot\|_{D(\mcA^\theta)})$
is a Banach space. Moreover, by the representation theorem for closed positive
quadratic forms, the form domain of $\mc A$ coincides with the
domain of its square root. Hence, \begin{equation}\label{08.25.10}
	D(\mcA^\f{1}{2})=H_0^1(\Om;w),  \mbox{ and } D(\mcA^1)=D(\mcA). 
\end{equation}
It is obvious that the embedding  
\begin{equation}\label{08.29.1}
	D(\mcA^s)\hra D(\mcA^\theta) \mbox{ is compact}
\end{equation}
for all $0\leq \theta <s$. 

\subsection{Semigroup and exponential decay}

Let 
\begin{equation}\label{08.25.6}
	\mathcal H=H_0^1(\Omega;w)\times L^2(\Omega).
\end{equation} 
We equip $\mathcal H$ with the inner product
\begin{equation*}
	\left\langle
	\begin{pmatrix}u\\v\end{pmatrix},
	\begin{pmatrix}z\\q\end{pmatrix}
	\right\rangle_{\mathcal H}
	=
	a(u,z)+(v,q)_{L^2(\Omega)}, \mbox{ with } a(u,z)=(u,z)_{H_0^1(\Om;w)}. 
\end{equation*}

We define
\begin{equation}\label{08.25.5}
	\mathfrak A=
	\begin{pmatrix}
		0&-I\\
		\mathcal A&V
	\end{pmatrix},
\end{equation}
with
\begin{equation}\label{08.25.7}
	\mathfrak A
	\begin{pmatrix}
		u\\
		v
	\end{pmatrix}
	=
	\begin{pmatrix}
		-v\\
		\mathcal A u+Vv
	\end{pmatrix},
	\qquad
	D(\mathfrak A)=D(\mathcal A)\times H_0^1(\Omega;w).
\end{equation}
Then the equation in \eqref{08.12.1} can be written as the abstract
evolution equation
\begin{equation}\label{08.23.6}
	\partial_t\Phi(t)+\mathfrak A\Phi(t)=0,  \mbox{ where } 
	\Phi(t)=
	\begin{pmatrix}
		\varphi(t)\\
		\partial_t\varphi(t)
	\end{pmatrix}. 
\end{equation}

For every $(u,v):=(u,v)^T\in D(\mathfrak A)$, we have
\begin{align*}
	\left\langle
	\mathfrak A
	\begin{pmatrix}u\\v\end{pmatrix},
	\begin{pmatrix}u\\v\end{pmatrix}
	\right\rangle_{\mathcal H}
	&=
	a(-v,u)+(\mathcal A u+Vv,v)_{L^2(\Omega)}
	\\
	&=
	-a(u,v)+a(u,v)+(Vv,v)_{L^2(\Omega)} =
	(Vv,v)_{L^2(\Omega)} \geq H\|v\|_{L^2(\Omega)}^2
	\geq0.
\end{align*}
Thus, $\mathfrak A$ is accretive.

We next prove that $\mathfrak A$ is maximal accretive. For every
$F=(f_1,f_2)^T\in\mathcal H$ and every $\lambda>0$, consider
\begin{equation}\label{08.23.7}
	(\lambda I+\mathfrak A)U=F,
\end{equation}
where $U=(u_1,u_2)^T\in D(\mathfrak A)$. Equivalently,
\begin{equation*}
	\begin{cases}
		\lambda u_1-u_2=f_1,\\
		\mathcal A u_1+Vu_2+\lambda u_2=f_2.
	\end{cases}
\end{equation*}
Hence $
u_2=\lambda u_1-f_1$, 
and therefore $
	\mathcal A u_1+\lambda(V+\lambda)u_1
	=
	f_2+(V+\lambda)f_1$. 
By the Lax--Milgram theorem, the variational problem
\begin{equation*}
	a(u_1,\psi)
	+
	\lambda\int_\Omega(V+\lambda)u_1\psi\, \df x
	=
	\int_\Omega
	\left[f_2+(V+\lambda)f_1\right]\psi\, \df x,
	\mbox{ for all } \psi\in H_0^1(\Omega;w),
\end{equation*}
admits a unique solution
$u_1\in H_0^1(\Omega;w)$. Moreover,
\[
\mathcal A u_1
=
f_2+(V+\lambda)f_1
-\lambda(V+\lambda)u_1
\in L^2(\Omega),
\]
and hence $u_1\in D(\mathcal A)$. Consequently,
$u_2=\lambda u_1-f_1\in H_0^1(\Omega;w)$, and
$U\in D(\mathfrak A)$.

Thus,
\begin{equation*}
R(\lambda I+\mathfrak A)=\mathcal H, \text{ for every }\lambda>0.
\end{equation*}
Since $\mathfrak A$ is accretive and
$R(\lambda I+\mathfrak A)=\mathcal H$ for some $\lambda>0$, 
$\mathfrak A$ is maximal accretive. In particular,
\begin{equation}\label{08.23.8}
	\mathfrak A:D(\mathfrak A)\s \mathcal H\ra\mathcal H \text{ is maximal accretive.}
\end{equation}

It follows from the Lumer-Phillips theorem that
$-\mathfrak A$ generates a $C_0$-semigroup of contractions on
$\mathcal H$. We denote this semigroup by
\begin{equation}\label{08.23.9}
	S_V(t)=e^{-t\mathfrak A},
	\mbox{ for all } t\geq0.
\end{equation}
Thus, for every $U_0\in\mathcal H$, the abstract Cauchy problem
\begin{equation*}
	\begin{cases}
		\partial_tU(t)+\mathfrak A U(t)=0,\\
		U(0)=U_0,
	\end{cases}
\end{equation*}
admits a unique mild solution $
U\in C([0,+\infty);\mathcal H)$, and $U(t)=S_V(t)U_0$. 
Moreover,
\begin{equation}\label{08.24.7}
	\|U(t)\|_{\mathcal H}
	\leq
	\|U_0\|_{\mathcal H},
	\mbox{ for all }  t\geq0.
\end{equation}

If $U_0\in D(\mathfrak A)$, then
\begin{equation}\label{08.30.2}
U\in C([0,+\infty);D(\mathfrak A))
\cap C^1([0,+\infty);\mathcal H),
\end{equation} 
and (see \cite[Theorem 7.4, p.~185]{Brezis})
\begin{equation}\label{08.31.1}
	\partial_tU(t)=-\mathfrak A U(t), \mbox{ and } \left\|\f{\df}{\df t}U(t)\right\|_\mcH = \|\mf{A}U(t)\|_{\mcH}\leq \|\mf{A}U_0\|_{\mcH} \mbox{ for all }t\geq 0.
\end{equation}

For the inhomogeneous equation (see Chapter IV, \cite{Pazy})
\begin{equation}\label{08.30.1}
	\begin{cases}
		\partial_tU(t)+\mathfrak A U(t)=F(t),
		& t\in[0,T],\\
		U(0)=U_0,
	\end{cases}
\end{equation}
the mild solution is given by the variation-of-constants formula
\begin{equation*}
	U(t)
	=
	S_V(t)U_0
	+
	\int_0^tS_V(t-s)F(s)\, \df s.
\end{equation*}
In particular, if $U_0\in D(\mathfrak A)$ and
$F\in C([0,T];\mathcal H)$, then
\begin{equation*}
U\in C([0,T];D(\mathfrak A))
\cap C^1([0,T];\mathcal H).
\end{equation*}

\begin{remark}\label{08.24.R1}
	Let $V_1,V_2\in\mc S(H,K;\beta)$. 
	Since $V_1,V_2\in L^\infty(\Om)$ and $
	H_0^1(\Om;w)\hookrightarrow L^2(\Om)$, 
	we have $
	(V_1-V_2)v\in L^2(\Om)$
	for every $v\in H_0^1(\Om;w)$.
	Hence, $
	\mc Au+V_1v\in L^2(\Om)$ 
	if and only if $
	\mc Au+V_2v\in L^2(\Om)$. 
	Consequently, the domain of $\mathfrak A_V$ is independent of
	the choice of $V\in\mc S(H,K;\beta)$, namely,
	\begin{equation*}
		D(\mathfrak A_{V_1})
		=
		D(\mathfrak A_{V_2})=D(\mcA)\ts H_0^1(\Om;w).
	\end{equation*}
\end{remark}

The following proposition establishes the exponential stability of the mild solution $\Phi(t)$ to \eqref{08.23.6}. In particular, the exponential decay rate is uniform with respect to the admissible damping potentials $V$. Note that $\|\Phi(t)\|_{\mcH}^2=2E(t)$.

\begin{proposition}\label{08.23.P1}
	Assume that $V\in L^\infty(\Omega)$ satisfies $
	V(x)\geq H>0$ in $\Omega$. 
	Then the semigroup generated by $-\mathfrak A$ is exponentially stable.
	More precisely, there exist constants $C>0$ and $\omega>0$ such that
	\begin{equation*}
	\|\Phi(t)\|_{\mathcal H}
	\leq
	Ce^{-\omega t}\|\Phi(0)\|_{\mathcal H},
	\mbox{ for all }  t\geq0,
	\end{equation*} where $\Phi$ is the mild solution of \eqref{08.23.6} with initial datum $
	\Phi(0)=\Phi_0\in\mathcal H$.
\end{proposition}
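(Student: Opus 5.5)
We will prove the estimate by the standard multiplier (perturbed energy) method. We work first with $\Phi_0\in D(\mathfrak A)$, so that by \eqref{08.30.2} the solution is a strong solution, and conclude for general $\Phi_0\in\mathcal H$ by density and the contraction bound \eqref{08.24.7}.

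For such data we set
\[E(t)=\tfrac12\bigl(a(\varphi,\varphi)+\|\partial_t\varphi\|_{L^2(\Omega)}^2\bigr)=\tfrac12\|\Phi(t)\|_{\mathcal H}^2 .\]
The accretivity computation preceding \eqref{08.23.8} gives
\[E'(t)=-\int_\Omega V(\partial_t\varphi)^2\,\mathrm dx\le -H\|\partial_t\varphi\|_{L^2(\Omega)}^2 .\]
Only the kinetic part is dissipated, so we add the multiplier $\varphi$. With
\[\rho(t)=(\partial_t\varphi,\varphi)_{L^2(\Omega)}+\tfrac12\int_\Omega V\varphi^2\,\mathrm dx,\]
testing the equation against $\varphi$ and using $(\mathcal A\varphi,\varphi)=a(\varphi,\varphi)$ from Section~\ref{S2} gives
\[\rho'(t)=\|\partial_t\varphi\|_{L^2(\Omega)}^2-a(\varphi,\varphi).\]
The damping term cancels exactly, because $\frac{d}{dt}\frac12\int V\varphi^2=\int V\varphi\,\partial_t\varphi$; this uses that $V$ does not depend on $t$.

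Next we define $E_\varepsilon(t)=E(t)+\varepsilon\rho(t)$ for small $\varepsilon>0$ and show two things.

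\textbf{Equivalence.} By Cauchy--Schwarz and the Poincar\'e inequality \eqref{08.23.3} with constant $C_P$ from \eqref{08.24.3},
\[|\rho(t)|\le \|\partial_t\varphi\|\,\|\varphi\|+\tfrac{\|V\|_\infty}{2}\|\varphi\|^2\le C_1E(t),\qquad C_1=\max\bigl(1,\sqrt{C_P}\bigr)+\|V\|_\infty C_P .\]
Hence $\tfrac12E\le E_\varepsilon\le \tfrac32E$ whenever $\varepsilon C_1\le\tfrac12$.

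\textbf{Differential inequality.} We have
\[E_\varepsilon'\le -(H-\varepsilon)\|\partial_t\varphi\|^2-\varepsilon\,a(\varphi,\varphi).\]
Choosing $\varepsilon\le H/2$ gives $E_\varepsilon'\le -\varepsilon\cdot 2E\cdot\tfrac12=-\varepsilon E\le -\tfrac{2\varepsilon}{3}E_\varepsilon$.

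Gronwall then yields $E_\varepsilon(t)\le e^{-2\varepsilon t/3}E_\varepsilon(0)$, hence $E(t)\le 3e^{-2\varepsilon t/3}E(0)$. This gives the claim with $C=\sqrt3$ and $\omega=\varepsilon/3$, where $\varepsilon=\min\{H/2,\,1/(2C_1)\}$. These constants depend only on $H$, $\|V\|_\infty$ and $C_P$, so for $V\in\mathcal S(H,K;\beta)$ they are uniform once $\|V\|_\infty$ is replaced by $K$.

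The main obstacle is justifying the two derivative identities in the degenerate setting. The identity for $\rho'$ needs $\frac{d}{dt}(\partial_t\varphi,\varphi)=(\partial_{tt}\varphi,\varphi)+\|\partial_t\varphi\|^2$ and $(\mathcal A\varphi,\varphi)=a(\varphi,\varphi)$. Both are legitimate for strong solutions: $\varphi\in C^1([0,\infty);H_0^1(\Omega;w))$ and $\partial_t\varphi\in C^1([0,\infty);L^2(\Omega))$ by \eqref{08.30.2}, and the integration by parts is exactly the relation $(\mathcal Au,v)_{L^2}=(u,v)_{H_0^1(\Omega;w)}$ recorded for $u\in D(\mathcal A)$. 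This avoids any conormal-derivative analysis at $\Gamma_N^0$. The passage to mild solutions then uses density of $D(\mathfrak A)$ in $\mathcal H$ together with continuity of $S_V(t)$.
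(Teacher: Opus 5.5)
Your proof is correct and follows essentially the same perturbed-energy argument as the paper. The paper uses $L=E+\delta\int_\Omega\varphi\,\partial_t\varphi\,\mathrm dx$ and absorbs the cross term $\int_\Omega V\varphi\,\partial_t\varphi\,\mathrm dx$ by Young's inequality and the Poincar\'e inequality, with the constants in \eqref{08.24.5}. Your one modification is to add $\tfrac12\int_\Omega V\varphi^2\,\mathrm dx$ to the multiplier so that this cross term cancels exactly, at the cost of a $\|V\|_{L^\infty(\Omega)}C_P$ contribution to the equivalence constant; this is a standard, slightly cleaner variant. Your constants check out: in fact $\varepsilon\le H/2$ gives $E_\varepsilon'\le-2\varepsilon E$, so your bound $-\varepsilon E$ is valid but not sharp.
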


\begin{proof}
	It is sufficient to prove the estimate for
	$\Phi_0\in D(\mathfrak A)$.
	The general case follows by density $D(\mf{A})\s \mcH$  and the continuity of the
	$C_0$-semigroup.
	
	Let $\Phi_0\in D(\mf{A})$. 
	Define the energy
	\begin{equation}\label{08.25.4}
	E(t)
	=\f{1}{2}\|\Phi(t)\|_{\mcH}^2=
	\frac12
	\left(
	\int_\Omega\nabla\varphi\cdot A\nabla\varphi\df x
	+
	\int_\Omega|\partial_t\varphi|^2\df x
	\right).
	\end{equation}
	Multiplying the equation \eqref{08.12.1} (see   \eqref{08.23.6}) by $\partial_t\varphi\in H_0^1(\Om;w)$   and integrating over
	$\Omega$, from 
	\begin{equation*}
		(\mcA u,v)_{L^2(\Om)}=a(u,v), \mbox{ for all } u \in D(\mathcal A),v\in H_0^1(\Om;w), 
	\end{equation*} we obtain
	\begin{equation*}
	E'(t)
	=
	-\int_\Omega V|\partial_t\varphi|^2\df x .
	\end{equation*}
	Hence,
	\begin{equation}\label{08.23.11}
	E'(t)
	\leq
	-H\|\partial_t\varphi\|_{L^2(\Omega)}^2 .
	\end{equation}
	However, this estimate does not directly imply exponential decay since
	the energy also contains the term $
	\int_\Omega\nabla\varphi\cdot A\nabla\varphi\df x$. 
	
	To overcome this difficulty, we introduce the auxiliary functional
	\begin{equation*}
	F(t)
	=
	\int_\Omega\varphi\partial_t\varphi\df x .
	\end{equation*}
	Then $
	F'(t)
	=
	\int_\Omega|\partial_t\varphi|^2\df x
	+
	\int_\Omega\varphi\partial_{tt}\varphi\df x$. 
	Therefore, from $\vp\in H_0^1(\Om;w)$, we get 
	\begin{equation*}
	\begin{aligned}
		F'(t)
		&=
		\|\partial_t\varphi\|_{L^2(\Omega)}^2
		-\int_\Omega\nabla\varphi\cdot A\nabla\varphi\df x 
		-\int_\Omega V\varphi\partial_t\varphi\df x .
	\end{aligned}
	\end{equation*}
	Since $V\in L^\infty(\Omega)$ and by the weighted Poincar\'e inequality \eqref{08.23.3},
	we obtain, for any $\e>0$,
	\begin{equation*}
	\begin{aligned}
		\left|
		\int_\Omega V\varphi\partial_t\varphi\df x
		\right| 
		&\leq
		\|V\|_{L^\infty(\Omega)}
		\|\varphi\|_{L^2(\Omega)}
		\|\partial_t\varphi\|_{L^2(\Omega)}\\ &\leq
		\f{1}{4}K\e^{-1}
		\|\partial_t\varphi\|_{L^2(\Omega)}^2
		+C_PK\e
		\int_\Omega\nabla\varphi\cdot A\nabla\varphi\df x.
	\end{aligned}
	\end{equation*} 
	Consequently,
	\begin{equation*}
	F'(t)
	\leq
	\left(1+\f{1}{4}K\e^{-1}\right) 
	\|\partial_t\varphi\|_{L^2(\Omega)}^2
	-
	\left(1-C_PK\e\right)
	\int_\Omega\nabla\varphi\cdot A\nabla\varphi\df x. 
	\end{equation*} 
	
	Now define the Lyapunov functional
	\begin{equation*}
	L(t)=E(t)+\delta F(t),
	\end{equation*}
	where $\delta>0$ will be chosen later. Since  
	\begin{equation*} 
		\begin{split} 
	|F(t)|
	&\leq
	\|\varphi\|_{L^2(\Omega)}
	\|\partial_t\varphi\|_{L^2(\Omega)}
	\leq \f{1}{2\sqrt{C_P}}\|\vp\|_{L^2(\Om)}^2+\f{\sqrt{C_P}}{2}\|\pt_t\vp\|_{L^2(\Om)}^2\leq 
	\sqrt{C_P}E(t), 
	\end{split} 
	\end{equation*}  
	where the constant $C_P$ is defined in \eqref{08.24.3}. 
	For $0<\de<\f{1}{\sqrt{C_P}}$,  we have the equivalence
	\begin{equation*} 
	\left(1-\de\sqrt{C_P}\right) E(t)\leq L(t)\leq \left(1+\de\sqrt{C_P}\right) E(t).
	\end{equation*}  
	Moreover,
	\begin{equation*}
	\begin{aligned}
		L'(t)
		&=
		E'(t)+\delta F'(t)\\
		&\leq
		\left[-H +\de \left(1+\f{1}{4}K\e^{-1}\right) \right]
		\|\partial_t\varphi\|_{L^2(\Omega)}^2
		-
		\de \left(1-C_PK\e\right)
		\int_\Omega\nabla\varphi\cdot A\nabla\varphi\df x.
	\end{aligned}
	\end{equation*}
	Choosing   
	\begin{equation}\label{08.24.5}
		\e=\f{1}{2C_PK},  \mbox{  and } \de=\min\left\{\f{H}{2+C_PK^2}, \f{1}{2\sqrt{C_P}}\right\},
	\end{equation}
	then, 
	\begin{equation*}
		0<\de<\f{1}{\sqrt{C_P}},\quad H -\de \left(1+\f{1}{4}K\e^{-1}\right)>0,\mbox{ and }  \de \left(1-C_PK\e\right)>0. 
	\end{equation*}
	hence 
	\begin{equation*}
	L'(t) \leq -\f{H}{2}\|\pt_t\vp\|_{L^2(\Om)}^2-\f{\de}{2}\|\vp\|_{H_0^1(\Om;w)}^2 \leq -\min\{H,\de\}E(t). 
	\end{equation*}
	Using the equivalence between $L(t)$ and $E(t)$, we obtain
	\begin{equation*}
	L'(t)\leq-\omega_0 L(t), \mbox{ with } \om_0=\f{\min\{H,\de\}}{1+\de\sqrt{C_P}}. 
	\end{equation*} 
	By Gronwall's  inequality, $
	L(t)\leq L(0)e^{-\omega_0 t}$. 
	Finally, again using the equivalence of $L(t)$ and $E(t)$, $
	E(t)\leq 4e^{-\omega_0 t}E(0)$. 
	Which gives
	\begin{equation}\label{08.24.4}
	\|\Phi(t)\|_{\mathcal H}
	\leq
	2e^{-\omega t}\|\Phi(0)\|_{\mathcal H}, \mbox{ with } \om=\f{\om_0}{2}=\f{1}{2}\f{\min\{H,\de\}}{1+\de\sqrt{C_P}}.
	\end{equation} 
This completes the proof.
\end{proof}

\section{Shape design}\label{S3} 

In this section, we will  consider the problem \eqref{08.24.1}. The use of a fixed constant $C_0$  
is essential for the stability of the optimization problem, since it provides a uniform exponential estimate with respect to the admissible potentials. 

\begin{remark}
	\label{08.25.R1}
	The fixed constant $C_0$ in \eqref{08.24.6} is introduced to
	ensure the uniformity of the exponential decay estimate with respect
	to variations of the potential $V$. Indeed, when a sequence
	$V_n\ra V$ weak-star in $L^\infty(\Omega)$ is considered
	(see Theorem \ref{08.24.T1}), the uniform constant $C_0$ allows us to
	pass to the limit in the exponential estimate and obtain the upper
	semicontinuity of the decay rate.
	
	The particular choice of a sufficiently large $C_0$ only affects the
	multiplicative factor in the decay estimate and does not affect the
	asymptotic exponential decay rate. Indeed, if for some $C_\omega>0$
	and $\omega>0$,
	\begin{equation*}
	\|\Phi(t)\|_{\mathcal H}
	\leq
	C_\omega e^{-\omega t}
	\|\Phi(0)\|_{\mathcal H},
	\mbox{ for all }  t\geq0 \mbox{ and all } \Phi(0)\in\mcH,
	\end{equation*}
	then, for any $0<\omega'<\omega$,
	\begin{equation*}
	\|\Phi(t)\|_{\mathcal H}
	\leq
	C_\omega e^{-(\omega-\omega')t}
	e^{-\omega't}
	\|\Phi(0)\|_{\mathcal H}
	\leq
	C_\omega e^{-\omega't}
	\|\Phi(0)\|_{\mathcal H}, 
	\mbox{ for all }  t\geq0 \mbox{ and all } \Phi(0)\in\mcH.
	\end{equation*}
	Therefore, the multiplicative constant only affects the prefactor of
	the estimate, while the asymptotic decay exponent is characterized by
	the supremum of all admissible exponential rates. 
	More precisely, the multiplicative constant does not affect the
	supremal exponential rate, since for every $C_\omega>0$ and every
	$0<\omega'<\omega$, one can absorb the factor $C_\omega$ into the
	exponential term on a bounded time interval, while the contribution
	of the multiplicative constant to
	\begin{equation*}
	-\frac{1}{T}\log\bigl(C_\omega\|S_V(T)\|_{\mathcal L(\mathcal H)}\bigr)
	\end{equation*}
	is of order $T^{-1}$ and therefore vanishes as $T\to+\infty$.
\end{remark}

The following lemma shows that $\omega(V)$, $V\in\mathcal S(H,K;\beta)$, is bounded from below.

\begin{lemma}\label{08.24.L1}
	There exists a constant $\ul\omega>0$, depending only on
	$\al, H$, $K$ and  $\Omega$, such that
	\begin{equation*}
	\omega(V)\geq \ul\omega, 
	\mbox{ for all } V\in\mcS(H,K;\be).
	\end{equation*}
	In particular,
	\begin{equation*}
	\inf_{V\in\mcS(H,K;\be)}\omega(V)>0.
	\end{equation*}
\end{lemma}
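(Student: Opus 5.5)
The plan is to reuse the explicit constants from the proof of Proposition~\ref{08.23.P1} and to check that they depend on $V$ only through the bounds $H\leq V\leq K$. For every $V\in\mcS(H,K;\be)$ we have $H\leq V\leq K$ a.e., so that proof applies verbatim. It gives
\begin{equation*}
\|S_V(t)\Phi_0\|_{\mcH}\leq 2e^{-\omega_* t}\|\Phi_0\|_{\mcH},\quad t\geq0,\qquad \omega_*=\f12\f{\min\{H,\de\}}{1+\de\sqrt{C_P}},
\end{equation*}
with $\de=\min\{H/(2+C_PK^2),\,1/(2\sqrt{C_P})\}$ and $C_P=4M^{2-\al}/(1-\al)^2$. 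I would first re-inspect that proof to confirm that only $H$, $K$ (through $\|V\|_{L^\infty}\leq K$), $\al$ and $M$ (hence $\Om$) enter. This yields a rate $\omega_*>0$ that is uniform over the admissible class.

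The only mismatch with the definition \eqref{08.24.6} is the prefactor. The estimate above has the constant $2$, while $\omega(V)$ is defined with the fixed constant $C_0>1$. If $C_0\geq2$, then $\omega_*$ is an admissible rate in \eqref{08.24.6}, so $\omega(V)\geq\omega_*$ and we may take $\ul\omega=\omega_*$.

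If $1<C_0<2$, I combine the decay estimate with the contraction property \eqref{08.24.7}, $\|S_V(t)\|\leq1$. Set $\omega'=\omega_*\log C_0/\log 2\in(0,\omega_*)$ and $T_0=\log2/\omega_*$, and split time at $T_0$:
\begin{itemize}
\item For $t\leq T_0$: $\omega' t\leq \log C_0$, so $\|\Phi(t)\|\leq\|\Phi_0\|\leq C_0e^{-\omega' t}\|\Phi_0\|$.
\item For $t\geq T_0$: $(\omega_*-\omega')t\geq(\omega_*-\omega')T_0=\log2-\log C_0$, hence $2e^{-\omega_* t}\leq C_0e^{-\omega' t}$.
\end{itemize}
Thus $\omega'$ is admissible in \eqref{08.24.6} for every $V$. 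Setting $\ul\omega=\omega_*\min\{1,\log C_0/\log2\}$ finishes the proof, and the infimum statement is immediate.

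The only delicate point is this prefactor reconciliation when $C_0<2$; the contraction bound handles it. The dependence of $\ul\omega$ on the fixed constant $C_0$ is harmless, since $C_0$ is fixed once and for all.
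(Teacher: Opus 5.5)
Your proof is correct, and its core is the paper's proof. The paper reads off $\omega(V)\geq\frac12\frac{\min\{H,\delta\}}{1+\delta\sqrt{C_P}}$ directly from \eqref{08.24.4}, after noting that $\epsilon$ and $\delta$ in \eqref{08.24.5} do not depend on $V$.

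The difference is your prefactor step. The paper's one-line deduction implicitly requires $C_0\geq2$, because \eqref{08.24.4} carries the constant $2$ while \eqref{08.24.6} fixes $C_0$. Your splitting at $T_0=\log2/\omega_*$, together with the contraction bound \eqref{08.24.7}, correctly handles $1<C_0<2$.

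The resulting dependence of $\underline\omega$ on $C_0$ cannot be avoided, so your version is the more accurate one. For $\Phi_0=(\phi_1,0)$, \eqref{08.31.1} gives $\|\partial_t\varphi(t)\|_{L^2(\Omega)}\leq\lambda_1t$. Hence $\|S_V(t)\|_{\mathcal L(\mathcal H)}^2\geq1-\tfrac{2K\lambda_1}{3}t^3$ for every $V\leq K$. This forces $\omega(V)\to0$ uniformly in $V$ as $C_0\downarrow1$. A lower bound depending only on $\alpha,H,K,\Omega$, as the statement claims, therefore holds only for $C_0$ bounded away from $1$, for example $C_0\geq2$.
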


\begin{proof}
	By \eqref{08.24.4}, for every
	$V\in\mathcal S(H,K;\beta)$, the exponential decay rate satisfies
	\begin{equation*}
		\omega(V)
		\geq
		\frac12
		\frac{\min\{H,\delta\}}
		{1+\delta\sqrt{C_P}},
	\end{equation*}
	where $\om(V)$ is defined in \eqref{08.24.6}, and  $\delta$ is defined in \eqref{08.24.5}, and
	$C_P$ is defined in \eqref{08.24.3}.
	
	Since every
	$V\in\mathcal S(H,K;\beta)$ satisfies $
	H\leq V\leq K$  a.e.~in $\Omega$, 
	the constants appearing in the proof of Proposition
	\ref{08.23.P1}, namely $\e$ and $\delta$, are independent of
	$V$.
	
	Therefore, setting
	\begin{equation*}
		\underline{\omega}
		=
		\frac12
		\frac{\min\{H,\delta\}}
		{1+\delta\sqrt{C_P}},
	\end{equation*}
	we have
	\begin{equation*}
		\omega(V)\geq\underline{\omega}>0, 
		\mbox{ for all }  V\in\mathcal S(H,K;\beta).
	\end{equation*}
	
	Consequently,
	\begin{equation*}
		\inf_{V\in\mathcal S(H,K;\beta)}
		\omega(V)
		\geq
		\underline{\omega}>0 .
	\end{equation*}
	This proves the lemma.
\end{proof}

\begin{lemma}\label{08.24.L2}
	There exists a constant $\overline\omega=K$ such that
	\begin{equation*}
		\omega(V)\leq\overline\omega,
		\mbox{ for all } V\in\mcS(H,K;\beta).
	\end{equation*}
\end{lemma}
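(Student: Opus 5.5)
The plan is to bound $\omega(V)$ from above through a \emph{lower} bound on the energy, using the energy identity from the proof of Proposition \ref{08.23.P1} together with the pointwise bound $V\leq K$. No multiplier or spectral argument is needed: the damping cannot remove energy faster than rate $2K$.

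First, take $\Phi_0\in D(\mathfrak A)$ and let $\Phi(t)=S_V(t)\Phi_0$. By \eqref{08.30.2}, $\Phi$ is a classical solution. As in the proof of Proposition \ref{08.23.P1}, multiply by $\partial_t\varphi\in H_0^1(\Omega;w)$ to get
\begin{equation*}
E'(t)=-\int_\Omega V|\partial_t\varphi|^2\,\mathrm dx\geq -K\|\partial_t\varphi\|_{L^2(\Omega)}^2\geq -2K E(t),
\end{equation*}
where the last step uses $\|\partial_t\varphi\|_{L^2(\Omega)}^2\leq \|\Phi(t)\|_{\mathcal H}^2=2E(t)$. Applying Gronwall's inequality to $e^{2Kt}E(t)$ gives $E(t)\geq e^{-2Kt}E(0)$, that is,
\begin{equation*}
\|S_V(t)\Phi_0\|_{\mathcal H}\geq e^{-Kt}\|\Phi_0\|_{\mathcal H},\quad \mbox{for all } t\geq 0.
\end{equation*}
Both sides depend continuously on $\Phi_0$ in $\mathcal H$ for fixed $t$, since $S_V(t)$ is a contraction. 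So the density of $D(\mathfrak A)$ in $\mathcal H$ extends this inequality to all $\Phi_0\in\mathcal H$.

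Second, argue by contradiction. Suppose some $\omega>K$ belongs to the set in \eqref{08.24.6}. Pick any $\Phi_0\neq0$. Combining the lower bound with the assumed decay estimate gives
\begin{equation*}
e^{-Kt}\|\Phi_0\|_{\mathcal H}\leq C_0e^{-\omega t}\|\Phi_0\|_{\mathcal H}\quad\mbox{for all } t\geq0,
\end{equation*}
hence $e^{(\omega-K)t}\leq C_0$ for all $t\geq0$. This fails for $t>\log C_0/(\omega-K)$. Therefore every admissible rate satisfies $\omega\leq K$, and taking the supremum gives $\omega(V)\leq K=\overline\omega$. The constant is uniform in $V\in\mathcal S(H,K;\beta)$ because only $V\leq K$ a.e.\ was used.

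The only delicate point is the rigorous justification of the energy identity, namely the differentiability of $E$ and the integration by parts $(\mathcal A\varphi,\partial_t\varphi)_{L^2(\Omega)}=a(\varphi,\partial_t\varphi)$. This is exactly what was already done for regular data in Proposition \ref{08.23.P1}, using \eqref{08.30.2}. The passage to general data is the density step described above.
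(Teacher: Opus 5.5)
Your proposal is correct and follows essentially the same route as the paper. Both use the energy identity with $V\le K$ to get $E(t)\ge e^{-2Kt}E(0)$, which is incompatible with $E(t)\le C_0^2e^{-2\omega t}E(0)$ for large $t$ unless $\omega\le K$. The only cosmetic differences are these: you extend the lower bound to all of $\mathcal H$ by density and argue by contradiction for an admissible $\omega>K$, whereas the paper stays with data in $D(\mathfrak A)$ and takes an arbitrary $\omega<\omega(V)$.
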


\begin{proof}
	It is sufficient to consider strong solutions with initial data
	$\Phi(0)\in D(\mathfrak A)$, since the corresponding estimate extends
	to $\mathcal H$ by density. 
	
	Let  $V\in\mcS(H,K;\beta)$ be arbitrary. From \eqref{08.23.11}, we obtain 
	\begin{equation*}
		E'(t)
		=
		-\int_\Omega V|\partial_t\varphi|^2\df x .
	\end{equation*}
	Since $V\leq K$ in $\Omega$, we have
	\begin{equation*}
		E'(t)
		\geq
		-K\|\partial_t\varphi(t)\|_{L^2(\Omega)}^2
		\geq
		-2K E(t).
	\end{equation*}
	Consequently, by Gronwall's inequality,
	\begin{equation*}
		E(t)\geq e^{-2Kt}E(0),
		\mbox{ for all } t\geq0 \mbox{ and all }\Phi(0)\in D(\mf A).
	\end{equation*}
	
	Now let $0<\omega<\omega(V)$. By the definition of $\omega(V)$,  
	\begin{equation*}
		\|\Phi(t)\|_{\mathcal H}
		\leq
		C_0 e^{-\omega t}\|\Phi(0)\|_{\mathcal H},
		\mbox{ for all } t\geq0 \mbox{ and for all } \Phi(0)\in D(\mf A).
	\end{equation*}
	Equivalently, $
		E(t)\leq C_0^2e^{-2\omega t}E(0)$. 
	Combining the above two inequalities, we obtain
	\begin{equation*}
		e^{-2Kt}
		\leq
		C_0^2e^{-2\omega t},
		\mbox{ for all } t\geq0 \mbox{ and  all } \Phi(0)\in D(\mf A).
	\end{equation*}
	Hence, $
		e^{2(\omega-K)t}\leq C_0^2 $. 
	Letting $t\to+\infty$ yields $
		\omega\leq K$. 
	Since this holds for every $\omega<\omega(V)$, we conclude that
	\begin{equation*}
		\omega(V)\leq K .
	\end{equation*}
	Therefore,
	\begin{equation*}
		\omega(V)\leq\overline\omega:=K,
		\mbox{ for all }  V\in\mcS(H,K;\beta).
	\end{equation*}
	This proves the lemma. 
\end{proof}

Lemma \ref{08.24.L2} shows that $\omega(V)$, $V\in\mathcal S(H,K;\beta)$, is bounded from above.
Now, we establish the following convergence result for the solutions
$\varphi_n\ (n\in\N^*)$ corresponding to the damping potentials $V_n$.

\begin{lemma}\label{08.24.L4}
	Let $\{V_n\}_{n\in\mathbb N}\subset\mc S(H,K;\beta)$
	satisfy
	\begin{equation*}
		V_n\ra V \mbox{ weak star in }L^\infty(\Omega).
	\end{equation*}
	Assume that the initial data satisfy
	\begin{equation*}
		(\varphi^0,\varphi^1)\in D(\mathfrak A).
	\end{equation*}
	Let $\varphi_n$ and $\varphi$ be the corresponding strong
	solutions of \eqref{08.12.1} associated with $V_n$ and $V$,
	respectively. Then, for every $t\in[0,T]$,
	\begin{equation*}
		E_{V_n}\ra E_V \mbox{ uniformly in } [0,T],
	\end{equation*}
	where
	\begin{equation*}
		E_{V}(t)
		=
		\frac12
		\left(
		\|\partial_t\varphi(t)\|_{L^2(\Omega)}^2
		+
		a(\varphi(t),\varphi(t))
		\right), 
	\end{equation*}
	and  $a(\cdot,\cdot)$ is defined in Remark \ref{08.23.R1}. 
	Moreover,
	\begin{equation*}
		\varphi_n\ra \varphi \mbox{ strongly in }
		C([0,T];H_0^1(\Omega;w)), \mbox{ and } 
		\partial_t\varphi_n\ra\partial_t\varphi \mbox{ strongly in }
		C([0,T];L^2(\Omega)).
	\end{equation*}
\end{lemma}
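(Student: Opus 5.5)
The plan is to work with the difference of the two states and close an energy estimate, using compactness to handle the product of two merely weakly convergent factors. Set $\Phi_n=(\varphi_n,\partial_t\varphi_n)^T=S_{V_n}(t)\Phi_0$, $\Phi=S_V(t)\Phi_0$ and $\Psi_n=\Phi_n-\Phi=(\psi_n,\partial_t\psi_n)^T$. First note that the weak-star limit satisfies $V\in\mcS(H,K;\beta)$, since that set is convex and weak-star closed. By Remark \ref{08.24.R1}, $D(\mathfrak A_{V_n})=D(\mathfrak A_V)$, so $\Psi_n\in C([0,T];D(\mathfrak A))\cap C^1([0,T];\mcH)$, and it solves
\begin{equation*}
\partial_t\Psi_n+\mathfrak A_{V_n}\Psi_n=\begin{pmatrix}0\\ (V-V_n)\partial_t\varphi\end{pmatrix},\qquad \Psi_n(0)=0 .
\end{equation*}
Taking the $\mcH$-inner product with $\Psi_n$ is legitimate by the regularity above. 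Accretivity then gives
\begin{equation*}
\frac12\|\Psi_n(t)\|_{\mcH}^2=-\int_0^t\!\!\int_\Om V_n|\partial_t\psi_n|^2\,\df x\,\df s+\int_0^t\!\!\int_\Om (V-V_n)\,\partial_t\varphi\,(\partial_t\varphi_n-\partial_t\varphi)\,\df x\,\df s .
\end{equation*}
The first term is nonpositive and is dropped. It therefore suffices to show that the two functions
\begin{equation*}
I_n(t)=\int_0^t\!\!\int_\Om (V-V_n)|\partial_t\varphi|^2\,\df x\,\df s,\qquad J_n(t)=\int_0^t\!\!\int_\Om (V-V_n)\,\partial_t\varphi\,\partial_t\varphi_n\,\df x\,\df s
\end{equation*}
tend to $0$ uniformly on $[0,T]$.

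For $I_n$: we have $|\partial_t\varphi|^2\in L^1(\Om\times(0,T))$. For fixed $t$, the function $\int_0^t|\partial_t\varphi(s)|^2\,\df s$ lies in $L^1(\Om)$, so weak-star convergence gives $I_n(t)\to0$ pointwise. Moreover,
\begin{equation*}
|I_n(t)-I_n(t')|\le 2K\,|t-t'|\,\sup_s\|\partial_t\varphi(s)\|_{L^2}^2 ,
\end{equation*}
so the family $\{I_n\}$ is equicontinuous on the compact interval $[0,T]$, and pointwise convergence upgrades to uniform convergence.

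For $J_n$, which is the main obstacle because both $V_n$ and $\partial_t\varphi_n$ converge only weakly, I would first prove that $\{\partial_t\varphi_n\}$ is relatively compact in $C([0,T];L^2(\Om))$. By \eqref{08.31.1}, $\|\mathfrak A_{V_n}\Phi_n(t)\|_{\mcH}\le\|\mathfrak A_{V_n}\Phi_0\|_{\mcH}$. The right-hand side is bounded uniformly in $n$ because $\|V_n\|_{L^\infty}\le K$. This has two consequences:
\begin{itemize}
\item $\partial_t\varphi_n(t)$ is bounded in $H_0^1(\Om;w)$ uniformly in $n$ and $t$, since it is the first component of $-\mathfrak A_{V_n}\Phi_n$;
\item $\partial_{tt}\varphi_n$ is bounded in $L^2(\Om)$, so $\{\partial_t\varphi_n\}$ is equi-Lipschitz in $C([0,T];L^2(\Om))$.
\end{itemize}
The compact embedding of Lemma \ref{08.23.L2} together with Arzel\`a--Ascoli then gives, for any subsequence, a further subsequence with $\partial_t\varphi_n\to z$ in $C([0,T];L^2(\Om))$. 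Along it we split
\begin{equation*}
J_n(t)=\int_0^t\!\!\int_\Om (V-V_n)\,\partial_t\varphi\, z\,\df x\,\df s+\int_0^t\!\!\int_\Om (V-V_n)\,\partial_t\varphi\,(\partial_t\varphi_n-z)\,\df x\,\df s .
\end{equation*}
The second piece is bounded by $2KT\sup_s\|\partial_t\varphi(s)\|_{L^2}\,\|\partial_t\varphi_n-z\|_{C([0,T];L^2)}\to0$. The first piece tends to $0$ uniformly in $t$ by the same pointwise-plus-equicontinuity argument used for $I_n$, now with $\partial_t\varphi\,z\in L^1(\Om\times(0,T))$.

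Putting these together, $\sup_{t\in[0,T]}\|\Psi_n(t)\|_{\mcH}\to0$ along the sub-subsequence. Since the limit $0$ does not depend on the chosen subsequence, the whole sequence satisfies $\varphi_n\to\varphi$ in $C([0,T];H_0^1(\Om;w))$ and $\partial_t\varphi_n\to\partial_t\varphi$ in $C([0,T];L^2(\Om))$. Finally, $E_{V_n}(t)=\tfrac12\|\Phi_n(t)\|_{\mcH}^2$ and
\begin{equation*}
\bigl|\|\Phi_n(t)\|_{\mcH}-\|\Phi(t)\|_{\mcH}\bigr|\le\|\Psi_n(t)\|_{\mcH},
\end{equation*}
while $\|\Phi_n(t)\|_{\mcH}$ and $\|\Phi(t)\|_{\mcH}$ are bounded by $\|\Phi_0\|_{\mcH}$ by contractivity. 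Hence $E_{V_n}\to E_V$ uniformly on $[0,T]$.
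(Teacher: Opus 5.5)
Your argument is correct, but it is organized differently from the paper's proof.

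\textbf{The paper's route.} It runs a compactness--identification scheme:
\begin{enumerate}
\item It extracts a limit of $\partial_t\varphi_n$ in $C([0,T];L^2(\Omega))$ via Aubin--Lions--Simon.
\item It identifies this limit with $\partial_t\varphi$ by passing to the limit in the weak formulation and invoking uniqueness.
\item It uses the energy identity $E_{V_n}(t)+\int_0^t\int_\Omega V_n|\partial_t\varphi_n|^2=E(0)$, together with uniform convergence of the dissipation integrals, to obtain $E_{V_n}\to E_V$.
\item Only then does it recover strong convergence of $\varphi_n(t)$ in $H_0^1(\Omega;w)$, from weak convergence plus convergence of norms (uniform convexity).
\item It upgrades this to $C([0,T];H_0^1(\Omega;w))$ by a separate equicontinuity and Arzel\`a--Ascoli argument.
\end{enumerate}

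\textbf{Your route.} You write an energy estimate for the difference $\Psi_n=\Phi_n-\Phi$. This controls the full $\mathcal H$-norm uniformly in $t$ through the bound $\sup_t\|\Psi_n(t)\|_{\mathcal H}^2\le 2\sup_t\bigl(|I_n(t)|+|J_n(t)|\bigr)$. You use compactness of $\{\partial_t\varphi_n\}$ only to handle the cross term $J_n$, and you never need to identify the subsequential limit $z$: any fixed $z\in C([0,T];L^2(\Omega))$ suffices for the splitting.

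\textbf{What each approach buys.} In your version the weak-formulation passage, the uniqueness appeal, the uniform-convexity step and the second Arzel\`a--Ascoli argument for $\varphi_n$ all disappear. Convergence of the whole state in $C([0,T];\mathcal H)$ comes out in one stroke, and energy convergence follows trivially. The paper's route instead puts the energy identity at the center and follows the template of a standard compactness argument. Both share the same essential ingredients:
\begin{itemize}
\item the uniform bound $\|\mathfrak A_{V_n}\Phi_n(t)\|_{\mathcal H}\le\|\mathfrak A_{V_n}\Phi_0\|_{\mathcal H}$;
\item the compact embedding of Lemma \ref{08.23.L2};
\item the pointwise-plus-equicontinuity upgrade for $t\mapsto\int_0^t\int_\Omega (V_n-V)f\,\mathrm dx\,\mathrm ds$.
\end{itemize}
Your version is the shorter and more self-contained of the two.
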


\begin{proof}
	We prove this theorem by the following steps. 
	
	{\it Step 1}. 
	Since $
	(\varphi^0,\varphi^1)\in D(\mathfrak A)$ 
	and the domain of $\mathfrak A_{V}$ is independent of
	$V\in\mc S(H,K;\beta)$ (see Remark \ref{08.24.R1}), we have $
	\Phi_n(t)\in D(\mathfrak A),  t\in[0,T]$. 
	Hence,
	\begin{equation*}
	\partial_t\varphi_n(t)\in H_0^1(\Omega;w),
	\mbox{ for all }  t\in[0,T].
	\end{equation*}
	Moreover, by \eqref{08.24.7} and \eqref{08.31.1}, we obtain
	\begin{equation}\label{08.24.9}
		\begin{split} 
		\sup_{n\in\mathbb N^*}
		\sup_{t\in [0,T]}E_{V_n}(t)
		&\leq \f{1}{2}\left(\| \vp^0\|_{H_0^1(\Om;w)}^2+\|\vp^1\|_{L^2(\Om)}^2\right)<+\infty,\\ 
		\sup_{n\in\mathbb N^*}\sup_{t\in [0,T]} \|\pt_t\vp_n(t)\|_{H_0^1(\Om;w)}
		&\leq C\left(\|\vp^0\|_{D(\mcA)}+\|\vp^1\|_{H_0^1(\Om;w)}\right)<+\infty,
		\end{split} 
	\end{equation} 
	and 
	\begin{equation}\label{08.24.10}
	\sup_{n\in\mathbb N^*}
	\|\partial_{tt}\varphi_n\|_
	{L^\infty(0,T;L^2(\Omega))}\leq 
	C\left(\| \vp^0\|_{D(\mcA)}+\|\vp^1\|_{H_0^1(\Om;w)}\right)<+\infty,
	\end{equation}
	where $D(\mcA)$ is defined in \eqref{08.25.10},  and  the positive constant $C$ depends only on $K$.

Combining   \eqref{08.24.9} and \eqref{08.24.10}, there exist a subsequence, still denoted by
$\{\partial_t\varphi_n\}_{n\in\N^*}$, and a function $z$ such that
\begin{equation}\label{08.31.2}
	\begin{split}
		\vp_n
		&\ra z \mbox{ weak star  in }L^\iy(0,T; H_0^1(\Om;w)), \\ 
		\pt_t\vp_n 
		&\ra \pt_tz \mbox{ weak star  in } L^\iy(0,T; H_0^1(\Om;w)),\\
		\pt_{tt}\vp_n
		&\ra \pt_{tt}z \mbox{ weakly in } L^2(Q).
	\end{split}
\end{equation}
By Lemma \ref{08.23.L2} and the Aubin-Lions-Simon compactness theorem, the sequence $\{\partial_t\varphi_n\}_{n\in\N^*}$ is relatively compact in $C([0,T];L^2(\Omega))$. Hence, there exists a subsequence, still denoted by $\{\partial_t\varphi_n\}_{n\in\N^*}$,  such that
\begin{equation}\label{08.31.3}
	\begin{split}  
	\partial_t\varphi_n
	&\to \pt_tz \mbox{ strongly in }
	C([0,T];L^2(\Omega)). 
	\end{split} 
\end{equation}

	{\it Step 2}. 
	It remains to identify the limit $z$. For any
	$\psi\in C_c^\infty(\Omega)$, using the weak formulation of the
	equation \eqref{08.12.1}, we have
	\begin{align*}
		&(\partial_{tt}\varphi_n,\psi)_{L^2(\Omega)}
		+a(\varphi_n,\psi)
		+(V_n\partial_t\varphi_n,\psi)_{L^2(\Omega)}
		=0 .
	\end{align*} 
	Passing to the limit, from \eqref{08.31.2},  the first two terms converge by the weak
	convergence of $\varphi_n$ and $\partial_t\varphi_n$, while from \eqref{08.31.3} and $V_n\ra V$ weak-star in $L^\iy(\Om)$ we obtain 
	\begin{equation*}
	V_n\partial_t\varphi_n
	\ra
	V\partial_tz \mbox{ weakly in }L^2(\Omega).
	\end{equation*} 
	Consequently, $z$ satisfies the limit equation with the same
	initial data. By the uniqueness of the strong solution, we obtain $
	z=\varphi$. 
	Therefore,
	\begin{equation}\label{08.24.8}
	\partial_t\varphi_n
	\to
	\partial_t\varphi \mbox{ strongly in }
	C([0,T];L^2(\Omega)).
	\end{equation}
	
	{\it Step 3}. 
	Finally, using the energy identity and
	$\partial_t\varphi_n\in H_0^1(\Omega;w)$, we have
	\begin{equation}\label{08.31.6}
		E_{V_n}(t)
		+
		\int_0^t\int_\Omega
		V_n|\partial_t\varphi_n|^2\,\mathrm{d}x\mathrm{d}s
		=E(0).
	\end{equation}
	We first show that
	\begin{equation*}
		\int_0^t\int_\Omega
		V_n|\partial_t\varphi_n|^2\,\mathrm{d}x\mathrm{d}s
		\ra 
		\int_0^t\int_\Omega
		V|\partial_t\varphi|^2\,\mathrm{d}x\mathrm{d}s
	\end{equation*}
	uniformly for $t\in[0,T]$.
	
	Indeed, from \eqref{08.24.9} and \eqref{08.24.8}, for every $t\in[0,T]$,
	\begin{align*}
		&\left|
		\int_0^t\int_\Omega
		V_n|\partial_t\varphi_n|^2\,\mathrm{d}x\mathrm{d}s
		-
		\int_0^t\int_\Omega
		V_n|\partial_t\varphi|^2\,\mathrm{d}x\mathrm{d}s
		\right| \\
		&\leq
		K
		\int_0^t\int_\Omega
		\left|
		|\partial_t\varphi_n|^2
		-
		|\partial_t\varphi|^2
		\right|
		\,\mathrm{d}x\mathrm{d}s \\
		&\leq
		K
		\|\partial_t\varphi_n-\partial_t\varphi\|_{L^2(Q)}
		\left(
		\|\partial_t\varphi_n\|_{L^2(Q)}
		+
		\|\partial_t\varphi\|_{L^2(Q)}
		\right)\\
		&\leq C\left(\|\vp^0\|_{D(\mcA)}+\|\vp^1\|_{H_0^1(\Om;w)}\right)\|\partial_t\varphi_n-\partial_t\varphi\|_{L^2(Q)},
	\end{align*}
	where the positive constant $C$ depends only on $K$. 
	Hence,
	\begin{equation}\label{08.31.4}
		\sup_{t\in[0,T]}
		\left|
		\int_0^t\int_\Omega
		V_n
		\left(
		|\partial_t\varphi_n|^2
		-
		|\partial_t\varphi|^2
		\right)
		\,\mathrm{d}x\mathrm{d}s
		\right|
		\ra 0.
	\end{equation}
	
	It remains to consider
	\begin{equation*}
		F_n(t)
		:=
		\int_0^t\int_\Omega
		(V_n-V)|\partial_t\varphi|^2
		\,\mathrm{d}x\mathrm{d}s.
	\end{equation*}
For each fixed $t\in[0,T]$, since $\int_0^t|\partial_t\varphi(s)|^2\,\mathrm{d}s
	\in L^1(\Omega)$, 
	$V_n\ra V$ weak-star  in $L^\infty(\Omega)$ implies $
		F_n(t)\ra 0$. 
	Moreover, the family $\{F_n\}_{n\in\mathbb N}$ is equicontinuous on
	$[0,T]$. Indeed, for $0\leq t_1<t_2\leq T$,
	\begin{align*}
		|F_n(t_2)-F_n(t_1)|
		&\leq
		\|V_n-V\|_{L^\infty(\Omega)}
		\int_{t_1}^{t_2}
		\|\partial_t\varphi(s)\|_{L^2(\Omega)}^2
		\,\mathrm{d}s \leq
		2K
		\int_{t_1}^{t_2}
		\|\partial_t\varphi(s)\|_{L^2(\Omega)}^2
		\,\mathrm{d}s.
	\end{align*}
	Since $
	\partial_t\varphi\in C([0,T];L^2(\Omega))$, 
	the function $
	s\mapsto
	\|\partial_t\varphi(s)\|_{L^2(\Omega)}^2$ 
	is continuous on $[0,T]$ and hence uniformly integrable. Therefore,
	$\{F_n\}_{n\in\mathbb N}$ is equicontinuous on $[0,T]$.
	By the Arzel\`a--Ascoli theorem, the family
	$\{F_n\}_{n\in\mathbb N}$ is relatively compact in
	$C[0,T]$. Since $F_n(t)\to0$ for every $t\in[0,T]$, every
	uniformly convergent subsequence has the limit $0$. Consequently,
	\begin{equation}\label{08.31.5}
		\sup_{t\in[0,T]}|F_n(t)|
		\ra 0.
	\end{equation}
	Combining \eqref{08.31.4} and \eqref{08.31.5}, we obtain
	\begin{equation*}
		\sup_{t\in[0,T]}
		\left|
		\int_0^t\int_\Omega
		V_n|\partial_t\varphi_n|^2
		\,\mathrm{d}x\mathrm{d}s
		-
		\int_0^t\int_\Omega
		V|\partial_t\varphi|^2
		\,\mathrm{d}x\mathrm{d}s
		\right|
		\ra 0.
	\end{equation*}
	Consequently, by the energy identities \eqref{08.31.6},
	\begin{equation*}
		E_{V_n}(t)\ra E_V(t), \mbox{ uniformly for }t\in[0,T].
	\end{equation*}
	Moreover, from
	\eqref{08.24.8} and the definition of the energy, we have
	\begin{equation*}
		a(\varphi_n(t),\varphi_n(t))
		\to
		a(\varphi(t),\varphi(t)),
		\mbox{ uniformly in }[0,T].
	\end{equation*}
	Since $
	\varphi_n(t)\ra \varphi(t)$ weakly in $H_0^1(\Omega;w)$ 
	for every $t\in[0,T]$, and $
	\|\varphi_n(t)\|_{H_0^1(\Omega;w)}
	\to
	\|\varphi(t)\|_{H_0^1(\Omega;w)}$, 
	the uniform convexity of
	$H_0^1(\Omega;w)$ yields
	\begin{equation}\label{08.31.7}
		\varphi_n(t)\to\varphi(t)
		\mbox{ strongly in }H_0^1(\Omega;w), \mbox{ for every } t\in [0,T]. 
	\end{equation} 
	
	From \eqref{08.24.9},  since
	\begin{equation*}
		\sup_{n\in\mathbb N^*}
		\|\partial_t\varphi_n\|_
		{L^\infty(0,T;H_0^1(\Omega;w))}
		<+\infty,
	\end{equation*}
	the sequence $\{\varphi_n\}_{n\in\mathbb N}$ is equicontinuous in
	$C([0,T];H_0^1(\Omega;w))$. Indeed, for
	$0\leq t_1<t_2\leq T$, we have
	\begin{align*}
		\|\varphi_n(t_2)-\varphi_n(t_1)\|_{H_0^1(\Omega;w)}^2
		&=
		\left\|
		\int_{t_1}^{t_2}
		\partial_t\varphi_n(s)\,\mathrm{d}s
		\right\|_{H_0^1(\Omega;w)}^2 \leq
		|t_2-t_1|
		\int_{t_1}^{t_2}
		\|\partial_t\varphi_n(s)\|_{H_0^1(\Omega;w)}^2
		\,\mathrm{d}s
		\\
		&\leq
		\left(
		\sup_{n\in\mathbb N}
		\|\partial_t\varphi_n\|_
		{L^\infty(0,T;H_0^1(\Omega;w))}
		\right)^2
		|t_2-t_1|^2.
	\end{align*}
Consequently, from \eqref{08.24.9}, 
\begin{equation*}
	\|\varphi_n(t_2)-\varphi_n(t_1)\|_{H_0^1(\Omega;w)}
	\leq C|t_2-t_1|,
\end{equation*}
where $C>0$ is independent of $n$, $t_1$ and $t_2$. Hence, $\{\varphi_n\}_{n\in\mathbb N}$ is equicontinuous in $C([0,T];H_0^1(\Omega;w))$. Combining this with \eqref{08.31.7} and  the Arzel\`a-Ascoli theorem yields
	\begin{equation*}
		\varphi_n\ra\varphi
		 \mbox{ strongly in }
		C([0,T];H_0^1(\Omega;w)).
	\end{equation*} 
	 We complete the proof of this lemma. 
\end{proof}

\begin{corollary}\label{08.29.C1}
Let $\{V_n\}_{n\in\mathbb N^*}\s\mathcal S(H,K;\beta)$ satisfy
	\begin{equation*}
		V_n\ra V \mbox{ weak star in }L^\infty(\Omega).
	\end{equation*}
	Then, for every
	\begin{equation*}
		\Phi_0 =(\varphi^0,\varphi^1)\in\mathcal H,
	\end{equation*}
	the corresponding weak solutions satisfy
	\begin{equation*}
		S_{V_n}(t)\Phi_0 
		\to
		S_V(t)\Phi_0  \mbox{ strongly in }\mathcal H,
	\end{equation*}
	uniformly for $t\in[0,T]$.
\end{corollary}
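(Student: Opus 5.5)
The plan is a standard density argument ($\varepsilon/3$ argument). It combines Lemma \ref{08.24.L4}, which gives strong convergence uniformly on $[0,T]$ for data in $D(\mathfrak A)$, with the uniform contraction bound \eqref{08.24.7}.

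Fix $\Phi_0\in\mathcal H$ and $\varepsilon>0$. The domain $D(\mathfrak A)=D(\mathcal A)\times H_0^1(\Omega;w)$ is dense in $\mathcal H$, and by Remark \ref{08.24.R1} it is independent of $V$. We therefore choose $\Psi_0\in D(\mathfrak A)$ with $\|\Phi_0-\Psi_0\|_{\mathcal H}<\varepsilon/3$. Note that the weak-star limit $V$ again belongs to $\mathcal S(H,K;\beta)$: the constraints $H\le V\le K$ a.e. and $\int_\Omega V\,\mathrm dx=\beta|\Omega|$ pass to the weak-star limit by testing against indicator functions and against $1\in L^1(\Omega)$. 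Consequently $S_V$ is also a contraction semigroup by \eqref{08.24.7}, and Lemma \ref{08.24.L4} applies to the pair $(V_n,V)$.

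For every $t\in[0,T]$ we split
\begin{equation*}
\|S_{V_n}(t)\Phi_0-S_V(t)\Phi_0\|_{\mathcal H}\leq \|S_{V_n}(t)(\Phi_0-\Psi_0)\|_{\mathcal H}+\|S_{V_n}(t)\Psi_0-S_V(t)\Psi_0\|_{\mathcal H}+\|S_V(t)(\Psi_0-\Phi_0)\|_{\mathcal H}.
\end{equation*}
By contractivity \eqref{08.24.7}, which is uniform in $n$ and $t$, the first and third terms are each at most $\varepsilon/3$.

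For the middle term, Lemma \ref{08.24.L4} applied with initial datum $\Psi_0$ gives
\begin{equation*}
\varphi_n\to\varphi \mbox{ in } C([0,T];H_0^1(\Omega;w)),\qquad \partial_t\varphi_n\to\partial_t\varphi \mbox{ in } C([0,T];L^2(\Omega)).
\end{equation*}
This is exactly $\sup_{t\in[0,T]}\|S_{V_n}(t)\Psi_0-S_V(t)\Psi_0\|_{\mathcal H}\to0$, so the middle term is less than $\varepsilon/3$ for $n$ large, uniformly in $t$. Taking the supremum over $t$ then proves the claim.

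The main subtlety is that Lemma \ref{08.24.L4} is proved along a subsequence, which arises from the Aubin--Lions--Simon and Arzel\`a--Ascoli compactness steps. I would handle this with the usual subsequence principle. Every subsequence of $\{V_n\}$ still converges weak star to $V$, so it has a further subsequence along which the middle term converges to $0$ uniformly on $[0,T]$. The limit $S_V(\cdot)\Psi_0$ is identified uniquely by uniqueness of the strong solution. Hence the whole sequence converges, and the argument above holds for the full sequence.
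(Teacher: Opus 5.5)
Your proposal is correct and follows essentially the same density argument as the paper: approximate $\Phi_0$ by data in $D(\mathfrak A)$, apply Lemma \ref{08.24.L4} to the approximants, and control the two remaining terms by a bound on $\|S_{V_n}(t)\|_{\mathcal L(\mathcal H)}$ that is uniform in $n$ and $t$. The differences are minor: the paper uses the cruder bound $2$ where you use contractivity, and it passes to $\limsup_n$ before letting $m\to\infty$ instead of fixing $\varepsilon/3$; your checks that $V\in\mathcal S(H,K;\beta)$ and that the subsequence extraction in Lemma \ref{08.24.L4} is harmless (by the subsequence principle) are correct and are left implicit in the paper.
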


\begin{proof}
	Let $\Phi_0 \in\mathcal H$. Since
	$D(\mathfrak A)$ is dense in $\mathcal H$, there exists a sequence
	$\{\Phi_m^0\}_{m\in\mathbb N^*}\subset D(\mathfrak A)$ such that
	\begin{equation*}
		\Phi_m^0\to\Phi_0  \mbox{ strongly in }\mathcal H.
	\end{equation*}
	
	By \eqref{08.24.4} and Lemmas \ref{08.24.L1} and \ref{08.24.L2},   we get $
		\sup_{0\leq t\leq T}
		\|S_{V_n}(t)\|_{\mathcal L(\mathcal H)}
		\leq 2$, 
	and the same estimate holds for $S_V(t)$.
	
	For fixed $m\in\N^*$, Lemma \ref{08.24.L4} gives
	\begin{equation*}
		S_{V_n}(t)\Phi_m^0
		\to
		S_V(t)\Phi_m^0 \mbox{ strongly in }\mathcal H, \mbox{ uniformly for } t\in[0,T].
	\end{equation*}

	Moreover,
	\begin{align*}
		&\|S_{V_n}(t)\Phi_0 -S_V(t)\Phi_0 \|_{\mathcal H}\\
		&\leq
		\|S_{V_n}(t)(\Phi_0 -\Phi_m^0)\|_{\mathcal H}+
		\|S_{V_n}(t)\Phi_m^0-S_V(t)\Phi_m^0\|_{\mathcal H}+
		\|S_V(t)(\Phi_m^0-\Phi_0 )\|_{\mathcal H}.
	\end{align*}
	Taking the supremum over $t\in[0,T]$ and then the limit
	$n\to+\infty$, from above, we obtain
	\begin{equation*}
		\limsup_{n\to+\infty}
		\sup_{0\leq t\leq T}
		\|S_{V_n}(t)\Phi_0 -S_V(t)\Phi_0 \|_{\mathcal H}
		\leq
		4\|\Phi_0 -\Phi_m^0\|_{\mathcal H}.
	\end{equation*}
	Letting $m\to+\infty$ proves the result.
\end{proof}

\begin{theorem}\label{08.24.T1}
Let $\{V_n\}_{n\in\mathbb N}\s \mc S(H,K;\beta)$ satisfy
\begin{equation*}
	V_n\ra V \mbox{ weak star  in }L^\infty(\Omega).
\end{equation*}
Then,
\begin{equation*}
\limsup_{n\to+\infty}\omega(V_n) \leq	\omega(V).
\end{equation*}
\end{theorem}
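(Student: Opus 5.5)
We plan to pass to the limit in the defining inequality of $\omega(V_n)$, using the strong convergence of the semigroups from Corollary \ref{08.29.C1}.

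Set $\omega^*=\limsup_{n\to+\infty}\omega(V_n)$. By Lemmas \ref{08.24.L1} and \ref{08.24.L2}, $\underline\omega\le\omega^*\le K$, so $\omega^*$ is finite and positive. Extract a subsequence, still denoted $\{V_n\}$, with $\omega(V_n)\to\omega^*$. Weak-star convergence is preserved along subsequences, and $V\in\mcS(H,K;\be)$ since the admissible class is convex and weak-star closed. It therefore suffices to show that every $\omega<\omega^*$ is admissible in the definition \eqref{08.24.6} of $\omega(V)$; this gives $\omega(V)\ge\omega$, and letting $\omega\uparrow\omega^*$ finishes the proof.

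Fix $0<\omega<\omega^*$. For all $n$ large we have $\omega<\omega(V_n)$, so we may pick $\omega_n\in(\omega,\omega(V_n)]$ admissible for $V_n$. Since the admissible rates form an interval $(0,\omega(V_n))$, possibly with its endpoint, and $\omega<\omega_n$, the rate $\omega$ itself is admissible for $V_n$. Hence
\begin{equation*}
\|S_{V_n}(t)\Phi_0\|_{\mcH}\le C_0e^{-\omega t}\|\Phi_0\|_{\mcH},\quad\mbox{for all } t\ge0,\ \Phi_0\in\mcH,\ n\ge n_0 .
\end{equation*}
The key point is that the constant $C_0$ is the same for every $n$; this is exactly the role described in Remark \ref{08.25.R1}.

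Now fix $t\ge0$ and $\Phi_0\in\mcH$, and choose $T>t$. By Corollary \ref{08.29.C1}, $S_{V_n}(t)\Phi_0\to S_V(t)\Phi_0$ strongly in $\mcH$, so the norm on the left converges. Letting $n\to+\infty$ in the inequality above yields
\begin{equation*}
\|S_V(t)\Phi_0\|_{\mcH}\le C_0e^{-\omega t}\|\Phi_0\|_{\mcH}.
\end{equation*}
Since $t$ and $\Phi_0$ are arbitrary, $\omega$ is admissible for $V$, so $\omega(V)\ge\omega$.

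The main obstacle is already resolved by the earlier lemmas: we need convergence of $S_{V_n}(t)\Phi_0$ pointwise in $t$ for arbitrary data in $\mcH$, not only in $D(\mathfrak A)$. This is supplied by the density argument in Corollary \ref{08.29.C1}, which uses the uniform contraction bound \eqref{08.24.7} on $[0,T]$.

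Only pointwise convergence in $t$ is needed, since the inequality is checked one time at a time. No uniformity on the infinite interval $[0,+\infty)$ is required, which avoids any difficulty with the non-self-adjoint long-time behavior. Finally, we should note explicitly that the limit passage preserves the non-strict inequality with the same $C_0$. This is why the result is only upper semicontinuity and not continuity.
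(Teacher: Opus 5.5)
Your proposal is correct and follows essentially the same route as the paper: the $n$-independent constant $C_0$ lets you pass the bound $\|S_{V_n}(t)\Phi_0\|_{\mathcal H}\le C_0e^{-\omega t}\|\Phi_0\|_{\mathcal H}$ to the limit via Corollary~\ref{08.29.C1}, which shows that $\omega$ is admissible for $V$. The differences are cosmetic. The paper argues by contradiction and phrases the limit through the energies $E_{V_n}(t)\to E_V(t)$. You argue directly and pointwise in $t$, and you make explicit the downward-closedness of the admissible rates, which the paper uses tacitly.
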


\begin{proof}
We argue by contradiction. Assume that
\begin{equation*}
	\limsup_{n\to+\infty}\omega(V_n)	>		\omega(V).
\end{equation*}
Then there exists a constant $\omega$ such that
\begin{equation*}
\omega(V)<\omega <		\limsup_{n\to+\infty}\omega(V_n).
\end{equation*}
Hence, up to a subsequence, we have
	\begin{equation*}
		\omega(V_n)>\omega, \mbox{ for all } n\in\mathbb N^*.
	\end{equation*} 
	By the definition of the exponential decay rate \eqref{08.24.6},   
	\begin{equation}\label{08.24.20}
		E_{V_n}(t)
		\leq
		C_0^2 e^{-2\omega t}E_{V_n}(0)
		=
		C_0^2 e^{-2\omega t}E(0),
		\mbox{ for all } t\geq0 \mbox{ and all } \Phi(0)\in\mcH.
	\end{equation}
	 
	Now, by Corollary \ref{08.29.C1}, the convergence
	\begin{equation*}
		E_{V_n}(t)\ra E_V(t)
	\end{equation*}
	is uniform on every bounded interval $[0,T]$.
	Passing to the limit in \eqref{08.24.20}, we obtain
	\begin{equation*}
		E_V(t)
		\leq
		C_0^2 e^{-2\omega t}E(0), \mbox{ for all }t\geq 0 \mbox{ and all }\Phi(0)\in\mcH.
	\end{equation*} 
	Consequently, $\omega$ is an admissible exponential decay
	rate for the system associated with $V$. Hence, by the definition
	of $\omega(V)$, we have
	\begin{equation*}
		\omega\leq\omega(V).
	\end{equation*} 
	This contradicts the choice of $\omega$, namely
	\begin{equation*}
		\omega>\omega(V).
	\end{equation*} 
	Therefore,
	\begin{equation*}
		\limsup_{n\to+\infty}\omega(V_n)
		\leq
		\omega(V),
	\end{equation*}
	which proves the upper semicontinuity of the decay rate.
\end{proof}

We now proceed to prove one of the main results of this paper.

\begin{theorem}[Existence of an optimal potential]\label{08.25.T1}
There exists $V^*\in\mathcal S(H,K;\beta)$ such that
\begin{equation*}
\omega(V^*) = \sup_{V\in\mathcal S(H,K;\beta)} \omega(V).
\end{equation*}
\end{theorem}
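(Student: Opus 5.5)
The plan is the direct method of the calculus of variations, combining the bounds of Lemmas \ref{08.24.L1} and \ref{08.24.L2} with the upper semicontinuity of Theorem \ref{08.24.T1}.

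First, set
\[
\omega^*:=\sup_{V\in\mathcal S(H,K;\beta)}\omega(V).
\]
By Lemmas \ref{08.24.L1} and \ref{08.24.L2}, $\underline\omega\le\omega^*\le K$, so $\omega^*$ is a finite positive number. Choose a maximizing sequence $\{V_n\}_{n\in\mathbb N^*}\subset\mathcal S(H,K;\beta)$ with $\omega(V_n)\to\omega^*$.

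Second, I show that $\mathcal S(H,K;\beta)$ is sequentially compact for the weak-star topology of $L^\infty(\Omega)$. The sequence is bounded by $K$ in $L^\infty(\Omega)=(L^1(\Omega))'$. Since $L^1(\Omega)$ is separable, the Banach--Alaoglu theorem gives a subsequence, still denoted $V_n$, and some $V^*\in L^\infty(\Omega)$ with $V_n\to V^*$ weak star. To check that $V^*$ is admissible:
\begin{itemize}
\item Testing against $\chi_E\in L^1(\Omega)$ for an arbitrary measurable $E\subset\Omega$ gives $H|E|\le\int_E V^*\,\mathrm dx\le K|E|$. Hence $H\le V^*\le K$ a.e.
\item Testing against $\chi_\Omega$ gives $\int_\Omega V^*\,\mathrm dx=\beta|\Omega|$.
\end{itemize}
Therefore $V^*\in\mathcal S(H,K;\beta)$, i.e. the admissible set is convex and weak-star closed.

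Third, Theorem \ref{08.24.T1} applies to $V_n\to V^*$ and yields
\[
\omega^*=\lim_{n\to\infty}\omega(V_n)=\limsup_{n\to\infty}\omega(V_n)\le\omega(V^*)\le\omega^*.
\]
Hence $\omega(V^*)=\omega^*$, and the supremum is attained.

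The substantive step is Theorem \ref{08.24.T1}, which is taken as given here. The only point needing care in this proof is that the weak-star limit stays in the admissible class, which the characteristic-function tests above settle.
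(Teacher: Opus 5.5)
Your proof is correct and follows essentially the same route as the paper. Both arguments take a maximizing sequence, extract a weak-star limit by Banach--Alaoglu, check that the limit stays in $\mathcal S(H,K;\beta)$ by testing against nonnegative $L^1$ functions (your characteristic functions $\chi_E$ are equivalent to the paper's general $\psi\ge 0$), and conclude with the upper semicontinuity of Theorem~\ref{08.24.T1}. Your appeal to the separability of $L^1(\Omega)$ to get a weak-star convergent \emph{subsequence} is slightly more careful than the paper's text; the lower bound from Lemma~\ref{08.24.L1} is not actually needed, and convexity of the admissible set is not what your limit argument shows, though neither point affects the proof.
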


\begin{proof}
Let  
\begin{equation}\label{08.25.3}
m:= \sup_{V\in\mathcal S(H,K;\beta)} \omega(V).
\end{equation}
Then, from Lemma \ref{08.24.L2}, we get $m\leq \ol\om$.  By the definition of the supremum, there exists a maximizing sequence $\{V_n\}_{n\in\mathbb N^*}\s\mathcal S(H,K;\beta)$ such that
\begin{equation}\label{08.25.1}
\omega(V_n)\rightarrow m \mbox{ as }n\rightarrow+\infty .
\end{equation}
	
Since $H\leq V_n\leq K$ a.e.~in $\Omega$,  the sequence $\{V_n\}_{n\in\N^*}$ is bounded in $L^\infty(\Omega)$. Hence, by the Banach-Alaoglu theorem, there exists a subsequence,
still denoted by $\{V_n\}_{n\in\N^*}$, and some $V^*\in L^\infty(\Omega)$ such that
\begin{equation}\label{08.25.2}
	V_n\ra V^* \mbox{ weak star in }L^\infty(\Omega).
\end{equation}

We next prove that $V^*\in\mathcal S(H,K;\beta)$. 
	
Indeed, for every nonnegative $\psi\in L^1(\Omega)$,
we have $\int_\Omega (V_n-H)\psi\df x\geq0$.  Passing to the limit yields $\int_\Omega (V^*-H)\psi\df x\geq0$,  and therefore $V^*\geq H$ a.e.~in $\Omega$.   Similarly, since $K-V_n\geq0$ a.e.~in $\Omega$, we obtain $K-V^*\geq0$ a.e.~in $\Omega$.  Hence,  $H\leq V^*\leq K$ a.e.~in $\Omega$.  Moreover, by the weak-star convergence \eqref{08.25.2},
\begin{equation*}
\int_\Om V^*\df x=\int_\Omega V^*\Ee_\Om \df x	=	\lim_{n\rightarrow+\infty}	\int_\Omega V_n\Ee_\Om \df x	=	\beta|\Omega|,
\end{equation*}
where $\Ee_\Om$ is the characteristic function of $\Om$.  Therefore, $V^*\in\mathcal S(H,K;\beta)$.

Finally, using Theorem \ref{08.24.T1}, we have
\begin{equation*}
\limsup_{n\rightarrow+\infty}\omega(V_n)	\leq \omega(V^*).
\end{equation*}
Since $\{V_n\}_{n\in\N^*}$ is a maximizing sequence, from \eqref{08.25.1}, we obtain  
\begin{equation*}
m\leq \omega(V^*).
\end{equation*}
On the other hand, by the definition of $m$ (see  \eqref{08.25.3}), $\omega(V^*)\leq m$. Therefore,
\begin{equation*}
\omega(V^*)=m .
\end{equation*}
Hence $V^*$ is an optimal potential. We complete the proof of this theorem. 
\end{proof}

The following corollary is a direct consequence of the Banach-Alaoglu theorem and the weak-star closedness of the admissible set.

\begin{corollary}\label{08.29.C3}
	The admissible set $\mathcal S(H,K;\beta)$ is weak-star compact in
	$L^\infty(\Omega)$.
\end{corollary}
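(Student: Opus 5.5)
The plan is to combine the Banach--Alaoglu theorem with weak-star closedness of $\mathcal S(H,K;\beta)$, essentially repeating the closedness argument from the proof of Theorem \ref{08.25.T1}, and to handle the metrizability subtlety separately.

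\emph{Relative compactness.} Every $W\in\mathcal S(H,K;\beta)$ satisfies $\|W\|_{L^\infty(\Omega)}\leq K$, so $\mathcal S(H,K;\beta)$ lies in the closed ball of radius $K$ in $L^\infty(\Omega)=(L^1(\Omega))^*$. By the Banach--Alaoglu theorem this ball is weak-star compact. Moreover, $L^1(\Omega)$ is separable because $\Omega$ is a bounded domain, so the weak-star topology on this ball is metrizable. Hence compactness is equivalent to sequential compactness, and it suffices to show that $\mathcal S(H,K;\beta)$ is weak-star sequentially closed.

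\emph{Closedness.} Let $W_n\in\mathcal S(H,K;\beta)$ with $W_n\to W$ weak star in $L^\infty(\Omega)$.
\begin{itemize}
\item \textbf{Pointwise bounds.} For every nonnegative $\psi\in L^1(\Omega)$ we have $\int_\Omega (W_n-H)\psi\,\mathrm dx\geq 0$ and $\int_\Omega (K-W_n)\psi\,\mathrm dx\geq0$. Passing to the limit gives the same inequalities for $W$. Choosing $\psi=\mathbbm 1_E$ for measurable $E\subset\Omega$ then yields $H\leq W\leq K$ a.e.
\item \textbf{Mass constraint.} Testing with $\psi=\mathbbm 1_\Omega\in L^1(\Omega)$ gives $\int_\Omega W\,\mathrm dx=\lim_n\int_\Omega W_n\,\mathrm dx=\beta|\Omega|$.
\end{itemize}
Thus $W\in\mathcal S(H,K;\beta)$.

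Alternatively, metrizability can be avoided altogether. The set $\mathcal S(H,K;\beta)$ is an intersection of the half-spaces $\{W:\int_\Omega W\psi\,\mathrm dx\geq\int_\Omega H\psi\,\mathrm dx\}$ and $\{W:\int_\Omega W\psi\,\mathrm dx\leq\int_\Omega K\psi\,\mathrm dx\}$ over $\psi\geq0$ in $L^1(\Omega)$, together with the hyperplane $\{W:\int_\Omega W\,\mathrm dx=\beta|\Omega|\}$. Each of these is defined through a weak-star continuous functional, so each is weak-star closed. A closed subset of a compact ball is compact.

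The only mildly delicate step is justifying that sequential closedness suffices, which rests on separability of $L^1(\Omega)$. Using the half-space description instead removes this issue entirely, so I expect no real obstacle.
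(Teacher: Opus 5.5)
Your proposal is correct and takes the same route as the paper: the paper derives the corollary from Banach--Alaoglu on the ball of radius $K$ together with the weak-star closedness of $\mathcal S(H,K;\beta)$, which it establishes in the proof of Theorem~\ref{08.25.T1} by testing against nonnegative $\psi\in L^1(\Omega)$ and against the characteristic function of $\Omega$. Your extra justification that sequential closedness suffices, via separability of $L^1(\Omega)$ or via the half-space description, makes explicit a point the paper leaves unstated.
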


\section{Characterize the optimal potential}\label{S4} 

In this section, we denote 
\begin{equation*}
	\mf{A}_V=
	\begin{pmatrix}
		0 & -I\\
		\mcA & V
	\end{pmatrix}.
\end{equation*}

\begin{lemma}[Equivalence of decay rates]
	\label{08.28.L1}
	Let $S_V(t)=e^{-t\mathfrak A_V}$  be the $C_0$-semigroup generated by
	$-\mathfrak A_V$. Let $\om(V)$ be defined in \eqref{08.24.6}, and  let
	\begin{equation*}
		\omega_0(V)
		:=
		-\lim_{T\to+\infty}
		\frac1T
		\log
		\|S_V(T)\|_{\mathcal L(\mathcal H)} .
	\end{equation*}
	Then
	\begin{equation*}
		\omega(V)=\omega_0(V).
	\end{equation*}
\end{lemma}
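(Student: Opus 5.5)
The plan is to compare the two quantities through the function $g(T):=\log\|S_V(T)\|_{\mathcal L(\mathcal H)}$. First, $g$ is subadditive, since $S_V(t+s)=S_V(t)S_V(s)$ gives $\|S_V(t+s)\|\leq\|S_V(t)\|\,\|S_V(s)\|$. Second, $g\leq 0$, because $S_V$ is a contraction semigroup by \eqref{08.24.7}. Third, $g(T)\geq -KT$, by the lower energy bound $E(t)\geq e^{-2Kt}E(0)$ from the proof of Lemma \ref{08.24.L2}. By Fekete's lemma, $\lim_{T\to\infty}g(T)/T$ exists and equals $\inf_{T>0}g(T)/T$. Hence $\omega_0(V)$ is well defined and satisfies $0<\underline\omega\leq\omega_0(V)\leq K$, using Proposition \ref{08.23.P1} and Lemma \ref{08.24.L1} for the lower bound. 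The characterization $\omega_0(V)=-\inf_{T>0}T^{-1}g(T)$ will be used below.

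\emph{Inequality $\omega(V)\leq\omega_0(V)$.} Fix any admissible $\omega<\omega(V)$ in \eqref{08.24.6}. Then $\|S_V(T)\|\leq C_0e^{-\omega T}$ for all $T\geq 0$, so $-\frac1T g(T)\geq\omega-\frac{\log C_0}{T}$. Letting $T\to\infty$ gives $\omega_0(V)\geq\omega$, and taking the supremum over $\omega$ yields $\omega(V)\leq\omega_0(V)$.

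\emph{Inequality $\omega_0(V)\leq\omega(V)$.} Fix $\omega'<\omega<\omega_0(V)$. By the definition of the limit there is a $T_0$ such that $\|S_V(t)\|\leq e^{-\omega t}$ for all $t\geq T_0$. I would then try to establish the fixed-constant bound $\|S_V(t)\|\leq C_0e^{-\omega' t}$ for every $t\geq0$ by splitting time into two regimes.
\begin{itemize}
\item For $t\leq t_1:=\log C_0/\omega'$, the contraction property gives $\|S_V(t)\|\leq 1\leq C_0e^{-\omega' t}$.
\item For $t\geq T_0$, the bound holds because $e^{-\omega t}\leq e^{-\omega' t}\leq C_0e^{-\omega' t}$.
\end{itemize}
If $T_0\leq t_1$, these two regimes cover $[0,\infty)$, so $\omega'$ is admissible and $\omega(V)\geq\omega'$. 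Letting $\omega'\uparrow\omega_0(V)$ would then conclude the proof.

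The main obstacle is the gap $t_1<t<T_0$, which arises when $T_0>t_1$. On this interval neither contractivity nor the asymptotic rate controls $\|S_V(t)\|$ with the prescribed constant $C_0$; this is exactly the transient regime that non-self-adjointness allows, and on which Remark \ref{08.25.R1} is silent. The first thing I would try is to remove the gap through the semigroup property. For $t_1<t<T_0$, write $t=kT_0+r$ with $k=0$, so the only information available is $\|S_V(t)\|\leq 1$. That information suffices exactly when $t\leq t_1$, so I would attempt to show that one can take $T_0\leq \log C_0/\omega'$.

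Subadditivity shows that $g(T)\leq -\omega T$ at a single time $T$ propagates to all multiples of $T$. Interpolating between consecutive multiples with $\|S_V(r)\|\leq1$ then gives $\|S_V(t)\|\leq e^{\omega T}e^{-\omega t}$ for all $t\geq 0$. This holds with constant $C_0$ provided $e^{(\omega-\omega')T}\leq C_0$, which reduces the problem to finding some $T\leq\log C_0/(\omega-\omega')$ with $-g(T)/T\geq\omega$.

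Since $-g(T)/T$ is in general only asymptotically close to its limit, this step may fail for small $C_0$. If it cannot be closed, I would state the result as $\omega(V)\leq\omega_0(V)$, with equality under the interpretation that $C_0$ may be enlarged depending on the rate, as indicated in Remark \ref{08.25.R1}.
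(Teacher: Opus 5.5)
Your proof of $\omega(V)\le\omega_0(V)$ is correct and is the paper's argument. Your Fekete step is a genuine addition, because the paper simply assumes that the limit defining $\omega_0(V)$ exists. You do not prove the reverse inequality, but you have identified exactly why it resists. With the constant $C_0$ frozen in \eqref{08.24.6}, nothing controls $\|S_V(t)\|$ in the transient window $\log C_0/\omega'<t<T_0$.

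The paper's proof does not get past this point either. It obtains $\|S_V(t)\|_{\mathcal L(\mathcal H)}\le Me^{-\omega t}$ with $M=\sup_{0\le t\le T_0}e^{\omega t}\|S_V(t)\|_{\mathcal L(\mathcal H)}$, and then concludes $\omega(V)\ge\omega$ by citing Remark \ref{08.25.R1}. That remark only shows that a rate can be lowered while keeping the \emph{same} constant; it never replaces $M$ by $C_0$, so this step is invalid. There is also a minor slip in your subadditivity detour. For $t<T$ you only know $\|S_V(t)\|\le 1$, so the requirement is $e^{\omega' T}\le C_0$, not $e^{(\omega-\omega')T}\le C_0$.

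In fact no argument can close this gap, because the reverse inequality is false in general. Set $f(\omega):=\sup_{t\ge0}e^{\omega t}\|S_V(t)\|_{\mathcal L(\mathcal H)}$. Then $\omega(V)=\sup\{\omega>0: f(\omega)\le C_0\}$, $f$ is nondecreasing, and $f(\omega_0(V))=\sup_{\omega<\omega_0(V)}f(\omega)$. Hence $\omega(V)=\omega_0(V)$ holds if and only if $f(\omega_0(V))\le C_0$.

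This condition fails for every $C_0>1$ under critical damping:
\begin{itemize}
\item Choose $H<2\sqrt{\lambda_1}<K$, $\beta=2\sqrt{\lambda_1}$ and $V\equiv\beta\in\mathcal S(H,K;\beta)$.
\item The planes spanned by the $\mathcal H$-orthonormal pairs $(\phi_n/\sqrt{\lambda_n},0)$, $(0,\phi_n)$ are mutually orthogonal and invariant. On each plane the generator is $\begin{pmatrix}0&\sqrt{\lambda_n}\\-\sqrt{\lambda_n}&-\beta\end{pmatrix}$.
\item For $n=1$ this matrix equals $-\frac{\beta}{2}I+N$ with $N=\frac{\beta}{2}\begin{pmatrix}1&1\\-1&-1\end{pmatrix}$ and $N^2=0$. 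Hence $\|S_V(t)\|\ge\bigl(1+\frac{\beta t}{2}\bigr)e^{-\beta t/2}$.
\item The modes with $\lambda_n>\lambda_1$ are underdamped with a uniformly bounded prefactor, so $\omega_0(V)=\beta/2$.
\item If $\omega=\frac{\beta}{2}-\epsilon$ were admissible, then $1+\frac{\beta t}{2}\le C_0e^{\epsilon t}$ for all $t\ge0$. Taking $t=1/\epsilon$ forces $\epsilon\ge\frac{\beta}{2(eC_0-1)}$.
\item Therefore $\omega(V)\le\frac{\beta}{2}\cdot\frac{eC_0-2}{eC_0-1}<\omega_0(V)$.
\end{itemize}

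So your fallback is the correct conclusion. In general $\omega(V)\le\omega_0(V)$, with equality if and only if $\sup_{t\ge0}e^{\omega_0(V)t}\|S_V(t)\|_{\mathcal L(\mathcal H)}\le C_0$. Equality holds unconditionally only if the constant in \eqref{08.24.6} is allowed to depend on the rate, which is the standard growth bound.
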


\begin{proof}
	The semigroup $S_V(t)$ generated by $-\mathfrak A_V$
	is defined in \eqref{08.23.9}. 
	
	We first prove that
	\begin{equation*}
	\omega(V)\leq\omega_0(V).
	\end{equation*}
	
	Let $\omega<\omega(V)$. By the definition of $\omega(V)$, we have $
	\|S_V(t)\Phi(0)\|_{\mathcal H}
	\leq
	C_0e^{-\omega t}\|\Phi(0)\|_\mcH
	\mbox{ for all } t\geq0$ and all $\Phi(0)\in \mcH$. Then 
	\begin{equation*}
		\|S_V(t)\|_{\mcL(\mcH)}\leq C_0e^{-\om t}, \mbox{ for all }t\geq 0. 
	\end{equation*}
	Hence,
	\begin{equation*}
	\frac1t
	\log
	\|S_V(t)\|_{\mathcal L(\mathcal H)}
	\leq
	-\omega+\frac{\log C_0}{t}, \mbox{ for all }t\geq 0.
	\end{equation*}
	Passing to the limit as $t\to+\infty$, we obtain $
	-\omega_0(V)\leq-\omega$, 
	and therefore $
	\omega\leq\omega_0(V)$. 
	Taking the supremum over all $\omega<\omega(V)$ yields
	\begin{equation}\label{08.28.1}
	\omega(V)\leq\omega_0(V).
	\end{equation}
	
	Conversely, let $
	0<\omega<\omega_0(V)$. 
	By the definition of $\omega_0(V)$,
	\begin{equation*}
	\lim_{t\to+\infty}
	\frac1t
	\log
	\|S_V(t)\|_{\mathcal L(\mathcal H)}
	=
	-\omega_0(V).
	\end{equation*}
	Hence,  there exists $T_0>0$ such that for all $t\geq T_0$, $
	\|S_V(t)\|_{\mathcal L(\mathcal H)}
	\leq
	e^{-\omega t}$. 
	On the bounded interval $[0,T_0]$, the strong continuity of the
	semigroup implies
	\begin{equation*}
	M:=\sup_{0\leq t\leq T_0}
	e^{\omega t}
	\|S_V(t)\|_{\mathcal L(\mathcal H)}
	<+\infty .
	\end{equation*}
	Therefore,
	\begin{equation*}
	\|S_V(t)\|_{\mathcal L(\mathcal H)}
	\leq
	M e^{-\omega t},
	\mbox{ for all }  t\geq0 .
	\end{equation*}
	Since $\|S_V(t)\|_{\mcL(\mcH)}=\sup_{0\neq \Phi(0)\in \mcH}\f{\|S_V(t)\Phi(0)\|_{\mcH}}{\|\Phi(0)\|_{\mcH}}$, we obtain 
	\begin{equation*}
		\|S_V(t)\Phi(0)\|_\mcH\leq Me^{-\om t}\|\Phi(0)\|_\mcH, \mbox{ for all } t\geq 0 \mbox{ and all } \Phi(0)\in \mcH. 
	\end{equation*} The multiplicative constant does not affect the supremal
	exponential decay rate (see Remark \ref{08.25.R1}), we conclude that $
	\omega(V)\geq\omega$. 
	Letting $\omega\uparrow\omega_0(V)$ gives
	\begin{equation}\label{08.28.2}
	\omega(V)\geq\omega_0(V).
	\end{equation}
	
	Combining the two inequalities \eqref{08.28.1} and \eqref{08.28.2}, we obtain
	\begin{equation*}
	\omega(V)=\omega_0(V).
	\end{equation*}
	We prove the lemma. 
\end{proof}

\begin{remark}\label{08.28.R2}
An alternative approach to the exponential stability problem is to consider the quadratic operator pencil
	\begin{equation*}
		Q_V(\lambda)
		=
		\lambda^2 I+\lambda V+\mathcal A,
		\quad
		D(Q_V(\lambda))=D(\mathcal A).
	\end{equation*}
This provides a natural spectral formulation of the damped wave equation. However, using this approach for the present optimization problem would require a substantial amount of additional spectral analysis. In particular, since $\mathcal A$ is an unbounded operator, $Q_V(\lambda)$ is an unbounded operator pencil, and one has to study its resolvent, root vectors, and, most importantly, the completeness of the system of root vectors in $L^2(\Omega)$. Such issues are considerably more delicate for non-self-adjoint unbounded operator pencils than for self-adjoint elliptic operators.
	
In contrast, the semigroup formulation developed above allows us to obtain uniform energy estimates directly from the dissipativity of the damping term. Moreover, the finite-time convergence of the solutions under the weak-star convergence $V_n\ra V$  in $L^\iy(\Om)$ can be established by compactness arguments. This is particularly well suited to the optimization problem over $\mathcal S(H,K;\beta)$, since it leads directly to the upper semicontinuity of the exponential decay rate without requiring a complete spectral decomposition of the quadratic operator pencil. 
\end{remark}

\begin{remark}\label{08.28.R1}
The representation in Lemma \ref{08.28.L1} provides an alternative description of the exponential decay rate in terms of the asymptotic behavior of the semigroup. Nevertheless, directly studying the dependence of $\omega(V)$ on the damping potential $V$ through the quantity
\begin{equation*}
	-\lim_{T\to+\infty}
	\frac{1}{T}\log\|S_V(T)\|_{\mathcal L(\mathcal H)}
\end{equation*}
is difficult, since it involves the long-time behavior of the semigroup and does not directly interact with the weak-star topology of $L^\infty(\Omega)$.

For this reason, we adopt an approximation and compactness approach. The corresponding solutions can be shown to converge strongly on	every finite time interval with respect to the weak-star convergence $V_n\ra  V$ in $L^\iy(\Om)$. In particular, the associated energies converge uniformly on bounded time intervals. This finite-time convergence allows us to transfer a uniform exponential estimate from the approximating systems to the limiting system, which yields the upper semicontinuity of the decay rate established in Theorem \ref{08.24.T1}. 
\end{remark}

\begin{corollary} 
	\label{08.28.C1}
	Let $V\in\mathcal S(H,K;\beta)$ and let
	$S_V(t)=e^{-t\mathfrak A_V}$ be the $C_0$-semigroup generated by
	$-\mathfrak A_V$. Define
	\begin{equation}\label{08.28.3}
		\omega_T(V)
		:=
		\frac1T
		\log
		\frac{C_0}
		{\|S_V(T)\|_{\mathcal L(\mathcal H)}},
		\mbox{ for }  T>0 .
	\end{equation}
	Then
	\begin{equation*}
		\lim_{T\to+\infty}\omega_T(V)=\omega(V).
	\end{equation*}
\end{corollary}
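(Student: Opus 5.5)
The plan is to write $\omega_T(V)$ as a sum of two terms and use Lemma \ref{08.28.L1}. By \eqref{08.28.3},
\begin{equation*}
	\omega_T(V)=\frac{\log C_0}{T}-\frac1T\log\|S_V(T)\|_{\mathcal L(\mathcal H)},\quad T>0.
\end{equation*}
First I will check that this expression is well defined. By Proposition \ref{08.23.P1} and the contraction property \eqref{08.24.7}, $0\le\|S_V(T)\|_{\mathcal L(\mathcal H)}\le 1$. Positivity of the norm comes from the lower energy bound in the proof of Lemma \ref{08.24.L2}: $E(t)\ge e^{-2Kt}E(0)$ on $D(\mathfrak A)$, and by density this gives $\|S_V(T)\|_{\mathcal L(\mathcal H)}\ge e^{-KT}>0$. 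So the logarithm is finite, and
\begin{equation*}
	\frac1T\log C_0\le\omega_T(V)\le K+\frac1T\log C_0 .
\end{equation*}

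Next, the term $\frac{\log C_0}{T}$ tends to $0$ as $T\to+\infty$, since $C_0>1$ is a fixed constant. This is exactly the $T^{-1}$ contribution noted in Remark \ref{08.25.R1}.

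For the second term, I will invoke Lemma \ref{08.28.L1}. It gives $-\lim_{T\to+\infty}\frac1T\log\|S_V(T)\|_{\mathcal L(\mathcal H)}=\omega_0(V)=\omega(V)$. Adding the two limits then yields $\lim_{T\to+\infty}\omega_T(V)=\omega(V)$.

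The main obstacle is that the limit defining $\omega_0(V)$ must actually exist, and Lemma \ref{08.28.L1} presupposes this. To justify it independently, I would use the semigroup property $\|S_V(t+s)\|\le\|S_V(t)\|\,\|S_V(s)\|$. This makes $p(t):=\log\|S_V(t)\|_{\mathcal L(\mathcal H)}$ subadditive. It is also bounded on compact intervals, since $-Kt\le p(t)\le 0$. Fekete's lemma in its continuous-parameter form, valid for subadditive functions bounded on bounded sets, then gives $\lim_{t\to\infty}p(t)/t=\inf_{t>0}p(t)/t\in[-K,0]$. With existence established, the rest of the argument above is routine.
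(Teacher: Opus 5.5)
Your argument is the paper's argument: split off $\frac{\log C_0}{T}$ and invoke Lemma~\ref{08.28.L1}. Your additions are correct. The bound $e^{-KT}\le\|S_V(T)\|_{\mathcal L(\mathcal H)}\le 1$ holds, and subadditivity of $t\mapsto\log\|S_V(t)\|_{\mathcal L(\mathcal H)}$ together with Fekete's lemma shows that $\lim_{T\to+\infty}\omega_T(V)$ exists and equals the growth bound $\omega_0(V)$.

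The gap is elsewhere. Everything rests on the identity $\omega_0(V)=\omega(V)$ of Lemma~\ref{08.28.L1}, and with the \emph{fixed} constant $C_0$ in \eqref{08.24.6} that identity is false in general. For $T>0$ one has $\|S_V(T)\|_{\mathcal L(\mathcal H)}\le C_0e^{-\omega T}$ if and only if $\omega\le\omega_T(V)$, so the definition itself gives
\[
\omega(V)=\inf_{T>0}\omega_T(V)\le\lim_{T\to+\infty}\omega_T(V)=\omega_0(V).
\]
Equality holds if and only if $\|S_V(t)\|_{\mathcal L(\mathcal H)}\le C_0e^{-\omega_0(V)t}$ for all $t\ge0$, and nothing forces this. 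The paper's proof of Lemma~\ref{08.28.L1} only obtains $\|S_V(t)\|_{\mathcal L(\mathcal H)}\le Me^{-\omega t}$ with $M=\sup_{0\le t\le T_0}e^{\omega t}\|S_V(t)\|_{\mathcal L(\mathcal H)}$, which can exceed $C_0$. Remark~\ref{08.25.R1} does not justify replacing $M$ by $C_0$. So the step ``invoke Lemma~\ref{08.28.L1}'' is a genuine gap, and you share it with the paper.

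Here is a concrete failure. Take data with $\beta=2\sqrt{\lambda_1}$ and $V\equiv\beta\in\mathcal S(H,K;\beta)$. The planes spanned by $(\phi_n,0)$ and $(0,\phi_n)$ are invariant and mutually $\mathcal H$-orthogonal. On the first plane, write $\Phi=(u\phi_1,v\phi_1)$ and use the coordinates $(\sqrt{\lambda_1}\,u,v)$, in which the $\mathcal H$-norm is Euclidean. There the mode is critically damped and the semigroup is $e^{-\beta t/2}(I+tN)$ with $N^2=0$ and $\|N\|=\beta$. Hence $\|S_V(t)\|_{\mathcal L(\mathcal H)}\ge(\beta t-1)e^{-\beta t/2}$. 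All other modes have $\lambda_n\ge\lambda_1$, so they are critically damped or underdamped. This gives $\|S_V(t)\|_{\mathcal L(\mathcal H)}\le C(1+t)e^{-\beta t/2}$, and thus $\omega_0(V)=\beta/2$. Then $\omega_T(V)<\beta/2$ as soon as $\beta T-1>C_0$, so $\omega(V)<\lim_{T\to+\infty}\omega_T(V)$ for every $C_0>1$.

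The failure is not special to this example. For any admissible $V$, the datum $\Phi_0=(\phi_1/\sqrt{\lambda_1},0)$ satisfies $\|S_V(t)\Phi_0\|_{\mathcal H}^2\ge1-\frac{2}{3}K\lambda_1t^3$, by the energy identity and $\|\partial_{tt}\varphi\|_{L^2(\Omega)}\le\|\mathfrak A_V\Phi_0\|_{\mathcal H}=\sqrt{\lambda_1}$. Hence $\sup_{t}e^{\omega_0(V)t}\|S_V(t)\|_{\mathcal L(\mathcal H)}>1$, and the identity breaks whenever $C_0$ is close enough to $1$.

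What your ingredients do prove is
\[
\lim_{T\to+\infty}\omega_T(V)=\omega_0(V)\ge\omega(V)=\inf_{T>0}\omega_T(V).
\]
The corollary as stated holds only if $\omega(V)$ is replaced by the growth bound $\omega_0(V)$, that is, if the constant in \eqref{08.24.6} is allowed to depend on $\omega$.
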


\begin{proof}
	By the definition of $\omega_T(V)$,
	\begin{equation*}
		\omega_T(V)
		=
		-\frac1T
		\log
		\|S_V(T)\|_{\mathcal L(\mathcal H)}
		+
		\frac{\log C_0}{T}.
	\end{equation*}
	By Lemma \ref{08.28.L1},
	\begin{equation*}
		-\lim_{T\to+\infty}
		\frac1T
		\log
		\|S_V(T)\|_{\mathcal L(\mathcal H)}
		=
		\omega(V).
	\end{equation*}
	Since $
		\lim_{T\to+\infty}
		\frac{\log C_0}{T}=0$, 
	we conclude that
	\begin{equation*}
		\lim_{T\to+\infty}\omega_T(V)=\omega(V).
	\end{equation*}
	This proves the corollary. 
\end{proof}

\begin{lemma} 
	\label{08.28.L2}
	Let $V\in\mathcal S(H,K;\beta)$ and let
	$\omega_T(V)$ be defined in \eqref{08.28.3}.
	Define the finite-time terminal energy functional
	\begin{equation}\label{08.29.2}
		J_T(V)
		:=
		-\frac12
		\|S_V(T)\|_{\mathcal L(\mathcal H)}^2 .
	\end{equation}
	Then
	\begin{equation*}
		\omega_T(V)
		=
		\frac1T\log C_0
		-
		\frac1{2T}\log(-2J_T(V)).
	\end{equation*}
	Consequently, the optimization problems
	\begin{equation*}
		\sup_{V\in\mathcal S(H,K;\beta)}
		\omega_T(V), \quad 
		\sup_{V\in\mathcal S(H,K;\beta)}
		J_T(V)
	\end{equation*}
	are equivalent.
\end{lemma}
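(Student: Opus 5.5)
The identity is pure algebra from the definitions \eqref{08.28.3} and \eqref{08.29.2}, so I would first make sure every logarithm is well defined. For each $T>0$ and $V\in\mcS(H,K;\be)$ we need $\|S_V(T)\|_{\mcL(\mcH)}>0$. I would get this from the lower energy bound in the proof of Lemma \ref{08.24.L2}: $E(T)\geq e^{-2KT}E(0)$ for $\Phi(0)\in D(\mf A)$, which extends to all of $\mcH$ by density and the continuity of the semigroup. This gives
\[
\|S_V(T)\|_{\mcL(\mcH)}\geq e^{-KT}>0 .
\]
Hence $-2J_T(V)=\|S_V(T)\|_{\mcL(\mcH)}^2\in(0,1]$, using the contraction bound \eqref{08.24.7} for the upper end.

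Next comes the computation itself. From \eqref{08.29.2}, $\|S_V(T)\|_{\mcL(\mcH)}=(-2J_T(V))^{1/2}$, so
\[
\log\|S_V(T)\|_{\mcL(\mcH)}=\tfrac12\log(-2J_T(V)).
\]
Substituting into $\om_T(V)=\f1T\log C_0-\f1T\log\|S_V(T)\|_{\mcL(\mcH)}$ gives the stated formula.

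For the equivalence of the two optimization problems, I would write $\om_T(V)=g(J_T(V))$ with
\[
g(s)=\f1T\log C_0-\f1{2T}\log(-2s),\qquad s\in\left[-\tfrac12,0\right).
\]
Since $s\mapsto -2s$ is decreasing and positive, and $-\log$ is decreasing, the map $g$ is strictly increasing and continuous on this interval. A strictly increasing continuous $g$ preserves order, so $J_T(V_1)\leq J_T(V_2)$ if and only if $\om_T(V_1)\leq\om_T(V_2)$. Consequently a potential $V^*$ maximizes $J_T$ over $\mcS(H,K;\be)$ exactly when it maximizes $\om_T$. The suprema are also related by $\sup\om_T=g(\sup J_T)$. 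This uses continuity of $g$, together with the fact that $\sup J_T\leq -\tfrac12 e^{-2KT}<0$ lies inside the domain of $g$ by the uniform lower bound above.

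The only real obstacle is the nonvanishing of $\|S_V(T)\|_{\mcL(\mcH)}$, and in particular that this bound is uniform in $V$ so that $\sup J_T$ stays away from $0$. The energy lower bound from Lemma \ref{08.24.L2} handles both.
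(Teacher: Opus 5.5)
Your proposal is correct and follows the paper's proof: the same identity $\log\|S_V(T)\|_{\mathcal L(\mathcal H)}=\frac12\log(-2J_T(V))$, followed by the strict monotonicity of $s\mapsto-\log(-2s)$ on $s<0$. Your extra bound $\|S_V(T)\|_{\mathcal L(\mathcal H)}\geq e^{-KT}$, which comes from the energy estimate $E(t)\geq e^{-2Kt}E(0)$ in the proof of Lemma~\ref{08.24.L2}, is a useful addition: it shows the logarithms are well defined and that $\sup J_T<0$, both of which the paper leaves implicit.
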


\begin{proof}
	By the definition of $\omega_T(V)$,
	\begin{equation*}
		\omega_T(V)
		=
		\frac1T\log C_0
		-
		\frac1T
		\log
		\|S_V(T)\|_{\mathcal L(\mathcal H)} .
	\end{equation*}
	Moreover, by the definition of $J_T(V)$,
	\begin{equation*}
		-2J_T(V)
		=
		\|S_V(T)\|_{\mathcal L(\mathcal H)}^2 .
	\end{equation*}
	Hence,
	\begin{equation*}
		\log
		\|S_V(T)\|_{\mathcal L(\mathcal H)}
		=
		\frac12
		\log(-2J_T(V)).
	\end{equation*}
Therefore,
\begin{equation*}
	\omega_T(V) = \frac1T\log C_0 -	\frac1{2T}	\log(-2J_T(V)).
\end{equation*}
	
Since the function
\begin{equation*}
	f(s)=-\log(-2s),		\qquad s<0
\end{equation*}
is strictly increasing, the maximization of $\omega_T(V)$ is equivalent to the maximization of $J_T(V)$. This completes the proof of the lemma. 
\end{proof}

\begin{lemma}[Weak-star upper semicontinuity of the terminal energy functional]
	\label{08.29.L2}
	Let $T>0$ be fixed and let
	$J_T:\mathcal S(H,K;\beta)\to\mathbb R$ be defined in \eqref{08.29.2}. 
	Then $J_T$ is weak-star upper semicontinuous on
	$\mathcal S(H,K;\beta)$.  
\end{lemma}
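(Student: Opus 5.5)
Upper semicontinuity of $J_T=-\frac12\|S_V(T)\|^2_{\mathcal L(\mathcal H)}$ amounts to lower semicontinuity of $V\mapsto \|S_V(T)\|_{\mathcal L(\mathcal H)}$ with respect to weak-star convergence in $\mathcal S(H,K;\beta)$. So the plan is as follows. Fix a sequence $\{V_n\}_{n\in\N^*}\s\mathcal S(H,K;\beta)$ with $V_n\ra V$ weak star in $L^\iy(\Om)$. By Corollary \ref{08.29.C3} the limit $V$ lies in $\mathcal S(H,K;\beta)$, so $J_T(V)$ is defined. It then suffices to show
\begin{equation*}
\|S_V(T)\|_{\mathcal L(\mathcal H)}\leq \liminf_{n\to+\infty}\|S_{V_n}(T)\|_{\mathcal L(\mathcal H)},
\end{equation*}
because $s\mapsto -\frac12 s^2$ is decreasing on $[0,+\iy)$, which gives $\limsup_{n}J_T(V_n)\leq J_T(V)$.

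To prove this, I write the operator norm as a supremum over unit vectors. A supremum of continuous functions is lower semicontinuous, so pointwise convergence $S_{V_n}(T)\Phi_0\ra S_V(T)\Phi_0$ is enough. Concretely, let $\e>0$ and choose $\Phi_0\in\mathcal H$ with $\|\Phi_0\|_{\mathcal H}=1$ and
\begin{equation*}
\|S_V(T)\Phi_0\|_{\mathcal H}\geq \|S_V(T)\|_{\mathcal L(\mathcal H)}-\e.
\end{equation*}
Corollary \ref{08.29.C1}, applied at $t=T$, gives $S_{V_n}(T)\Phi_0\ra S_V(T)\Phi_0$ strongly in $\mathcal H$. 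Therefore
\begin{equation*}
\liminf_{n\to+\infty}\|S_{V_n}(T)\|_{\mathcal L(\mathcal H)}\geq \lim_{n\to+\infty}\|S_{V_n}(T)\Phi_0\|_{\mathcal H}=\|S_V(T)\Phi_0\|_{\mathcal H}\geq \|S_V(T)\|_{\mathcal L(\mathcal H)}-\e,
\end{equation*}
and letting $\e\to0$ yields the claim.

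For completeness I would also record that $J_T$ is real-valued and bounded on $\mathcal S(H,K;\beta)$. Contractivity \eqref{08.24.7} gives $-\frac12\leq J_T(V)\leq 0$, so $\limsup_n J_T(V_n)$ is finite and the statement is meaningful.

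The only nontrivial ingredient is the strong convergence of the semigroups at the fixed time $T$ for data in all of $\mathcal H$. This has already been settled in Lemma \ref{08.24.L4} and Corollary \ref{08.29.C1} by a compactness argument plus density. So I do not expect a real obstacle. The one point to watch is the direction of the semicontinuity. Strong (pointwise) convergence of operators gives only lower semicontinuity of the operator norm, not continuity, and that is exactly why the lemma claims upper semicontinuity of $J_T$ rather than continuity.
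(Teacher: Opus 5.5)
Your proposal is correct and follows the paper's proof essentially step for step. You pick a unit vector that nearly attains $\|S_V(T)\|_{\mathcal L(\mathcal H)}$, apply Corollary \ref{08.29.C1} at $t=T$ to get lower semicontinuity of the operator norm, and conclude via the monotonicity of $s\mapsto -\frac12 s^2$ on $[0,+\infty)$.
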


\begin{proof}
	Let
	\begin{equation*}
		V_n\ra V \mbox{ weak star in  }L^\infty(\Omega).
	\end{equation*}
	We first show that
	\begin{equation}\label{08.29.3}
		\liminf_{n\to+\infty}
		\|S_{V_n}(T)\|_{\mathcal L(\mathcal H)}
		\geq
		\|S_V(T)\|_{\mathcal L(\mathcal H)}.
	\end{equation}
	
	Fix $\e>0$. By the definition of the operator norm,
	there exists $\Phi_\e^0\in\mathcal H$ such that $
		\|\Phi_\e^0\|_{\mathcal H}=1$ 
	and
	\begin{equation*}
		\|S_V(T)\Phi_\e^0\|_{\mathcal H}
		\geq
		\|S_V(T)\|_{\mathcal L(\mathcal H)}
		-\e.
	\end{equation*} 
	By Corollary \ref{08.29.C1}, applied to the fixed initial datum
	$\Phi_\e^0\in\mathcal H$, we have
	\begin{equation*}
		S_{V_n}(T)\Phi_\e^0
		\to
		S_V(T)\Phi_\e^0 \mbox{ strongly in }\mathcal H.
	\end{equation*}
	Consequently,
	\begin{equation*}
		\|S_{V_n}(T)\Phi_\e^0\|_{\mathcal H}
		\to
		\|S_V(T)\Phi_\e^0\|_{\mathcal H}.
	\end{equation*}
	
	On the other hand, by the definition of the operator norm, $
		\|S_{V_n}(T)\|_{\mathcal L(\mathcal H)}
		\geq
		\|S_{V_n}(T)\Phi_\e^0\|_{\mathcal H}$. 
	Hence,
	\begin{align*}
		\liminf_{n\to+\infty}
		\|S_{V_n}(T)\|_{\mathcal L(\mathcal H)}
		&\geq
		\lim_{n\to+\infty}
		\|S_{V_n}(T)\Phi_\e^0\|_{\mathcal H}=
		\|S_V(T)\Phi_\e^0\|_{\mathcal H}\geq
		\|S_V(T)\|_{\mathcal L(\mathcal H)}
		-\e.
	\end{align*}
	Since $\e>0$ is arbitrary, we obtain
	\eqref{08.29.3}.
	
	Therefore,
	\begin{align*}
		\limsup_{n\to+\infty}J_T(V_n)
		&=
		-\frac12
		\liminf_{n\to+\infty}
		\|S_{V_n}(T)\|_{\mathcal L(\mathcal H)}^2 \leq
		-\frac12
		\|S_V(T)\|_{\mathcal L(\mathcal H)}^2 =
		J_T(V).
	\end{align*}
Thus, $J_T$ is weak-star upper semicontinuous on $\mathcal S(H,K;\beta)$. This completes the proof of the lemma. 
\end{proof}

\begin{theorem}[Existence of finite-time optimal damping]
	\label{08.29.T1}
	For every fixed $T>0$, there exists $
	V_T^*\in\mathcal S(H,K;\beta)$ 
	such that
	\begin{equation*}
	J_T(V_T^*)
	=
	\max_{V\in\mathcal S(H,K;\beta)}
	J_T(V).
	\end{equation*}
	Equivalently,
	\begin{equation*}
	\omega_T(V_T^*)
	=
	\max_{V\in\mathcal S(H,K;\beta)}
	\omega_T(V).
	\end{equation*}
\end{theorem}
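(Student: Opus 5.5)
The plan is the direct method in the calculus of variations, combining Corollary \ref{08.29.C3} with Lemma \ref{08.29.L2}; the equivalence with the $\omega_T$ problem then follows from Lemma \ref{08.28.L2}.

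\emph{Step 1 (finiteness of the supremum).} Fix $T>0$ and set
\begin{equation*}
m_T:=\sup_{V\in\mathcal S(H,K;\beta)}J_T(V).
\end{equation*}
By contractivity \eqref{08.24.7}, $\|S_V(T)\|_{\mathcal L(\mathcal H)}\le 1$, so $-\frac12\le J_T(V)\le 0$ for every admissible $V$. Moreover, $\|S_V(T)\|_{\mathcal L(\mathcal H)}\geq e^{-KT}>0$ by the lower energy bound in the proof of Lemma \ref{08.24.L2}, so $J_T(V)<0$ and $\omega_T(V)$ is well defined. Hence $m_T\in[-\frac12,0)$ is finite, and we pick a maximizing sequence $\{V_n\}\subset\mathcal S(H,K;\beta)$ with $J_T(V_n)\to m_T$.

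\emph{Step 2 (compactness).} By Corollary \ref{08.29.C3}, $\mathcal S(H,K;\beta)$ is weak-star compact. Since $L^1(\Omega)$ is separable, the closed balls of $L^\infty(\Omega)$ are weak-star metrizable, so sequential extraction is legitimate: a subsequence satisfies $V_n\to V_T^*$ weak star in $L^\infty(\Omega)$ with $V_T^*\in\mathcal S(H,K;\beta)$. This is exactly the argument used in the proof of Theorem \ref{08.25.T1}.

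\emph{Step 3 (upper semicontinuity).} Lemma \ref{08.29.L2} gives
\begin{equation*}
m_T=\lim_{n\to+\infty}J_T(V_n)\le\limsup_{n\to+\infty}J_T(V_n)\le J_T(V_T^*)\le m_T,
\end{equation*}
so $J_T(V_T^*)=m_T$ and the supremum is a maximum.

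\emph{Step 4 (equivalent formulation).} By Lemma \ref{08.28.L2},
\begin{equation*}
\omega_T(V)=\frac1T\log C_0-\frac1{2T}\log\bigl(-2J_T(V)\bigr),
\end{equation*}
which is a strictly increasing function of $J_T(V)$ on $[-\frac12,0)$. Therefore $V_T^*$ also maximizes $\omega_T$ over $\mathcal S(H,K;\beta)$.

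The only genuinely delicate ingredient is the upper semicontinuity of $J_T$, namely the lower semicontinuity of the operator norm $\|S_V(T)\|_{\mathcal L(\mathcal H)}$ under weak-star convergence of the potentials. This is already supplied by Lemma \ref{08.29.L2}, via the strong convergence of Corollary \ref{08.29.C1}, so the proof of the theorem itself reduces to bookkeeping. The one point to check is Step 1: the value $J_T(V)$ must stay strictly negative, i.e.\ $\|S_V(T)\|_{\mathcal L(\mathcal H)}>0$, so that the logarithm in $\omega_T$ is defined.
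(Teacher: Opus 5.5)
Your proposal is correct and follows the same route as the paper. Both arguments take a maximizing sequence, extract a weak-star convergent subsequence using the compactness of $\mathcal S(H,K;\beta)$, pass to the limit with the upper semicontinuity of Lemma~\ref{08.29.L2}, and invoke Lemma~\ref{08.28.L2} for the equivalent $\omega_T$ formulation. Your additional checks are details the paper leaves implicit: that $J_T(V)\in[-\frac12,-\frac12e^{-2KT}]$, so the supremum is finite and the logarithm in $\omega_T$ is defined, and that separability of $L^1(\Omega)$ justifies the sequential extraction.
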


\begin{proof}
	Let
	\begin{equation*}
	M_T=
	\sup_{V\in\mathcal S(H,K;\beta)}
	J_T(V).
	\end{equation*}
	
	Choose a maximizing sequence $
	J_T(V_n)\to M_T$.  
	Since $\mathcal S(H,K;\beta)$ is weak-star compact, there exists a
	subsequence, still denoted by $\{V_n\}$, and $
	V_T^*\in\mathcal S(H,K;\beta)$ 
	such that
	\begin{equation*}
	V_n\ra V_T^* \mbox{ weak star in }L^\infty(\Omega).
	\end{equation*} 
	By the weak-star upper semicontinuity of $J_T$ in Lemma \ref{08.29.L2}, we get 
	\begin{equation*}
	\limsup_{n\to+\infty}J_T(V_n)
	\leq
	J_T(V_T^*).
	\end{equation*} 
	Since $\{V_n\}_{n\in\N^*}$ is a maximizing sequence in $L^\iy(\Om)$, we obtain  $
	M_T
	\leq
	J_T(V_T^*)$. 
	
	The opposite inequality follows from the definition of $M_T$.
	Hence,
	\begin{equation*}
	J_T(V_T^*)=M_T .
	\end{equation*}
	
	Finally, by Lemma \ref{08.28.L2}, maximizing $J_T$ is equivalent to
	maximizing $\omega_T$.
\end{proof}

\begin{remark}[On the attainability assumption for the finite-time functional]
	\label{08.31.R1}
	For a fixed $V\in\mathcal S(H,K;\beta)$ and $T>0$, consider the finite-time functional
	\begin{equation*}
		J_T(V;\Phi_0 )
		:=
		-\frac12\|S_V(T)\Phi_0\|_{\mathcal H}^2.
	\end{equation*}
	In the subsequent analysis, we assume that there exists an initial datum
	$\Phi_0\in D(\mf{A})$, with $\|\Phi_0 \|_{\mathcal H}=1$, such that
	\begin{equation}\label{08.31.8}
		J_T(V)
		=
		-\frac12\|S_V(T)\Phi_0 \|_{\mathcal H}^2.
	\end{equation}
	Equivalently,
	\begin{equation}\label{08.31.9}
		\|S_V(T)\Phi_0 \|_{\mathcal H}
		=
		\|S_V(T)|_{\mathcal L(\mathcal H)}.
	\end{equation}
	Thus, the initial datum $\Phi_0 $ realizes the worst-case finite-time decay of the semigroup at time $T$.
	
	This assumption is natural from the viewpoint of semigroup optimization. Indeed, the exponential decay rate is not determined by the behavior at a single finite time, but by the asymptotic decay of the semigroup. By Lemma~\ref{08.28.L1},
	\begin{equation}\label{08.31.10}
		\omega(V)
		=
		-\lim_{T\to+\infty}
		\frac1T
		\log
		\|S_V(T)\|_{\mathcal L(\mathcal H)}.
	\end{equation}
	Consequently, the asymptotic decay rate can be recovered from the finite-time operator norms. In particular, if
	$\{T_n\}_{n\in\mathbb N}$ is any sequence such that
	$T_n\to+\infty$, then
	\begin{equation}\label{08.31.11}
		-\frac1{T_n}
		\log
		\|S_V(T_n)\|_{\mathcal L(\mathcal H)}
		\ra 
		\omega(V).
	\end{equation}
	For each fixed $T_n$, the operator $S_V(T_n)$ is bounded on $\mathcal H$, and the quantity
	$\|S_V(T_n)\|_{\mathcal L(\mathcal H)}$ represents the largest possible norm of the state at time $T_n$ among all normalized initial data. Hence, the assumption~\eqref{08.31.8} amounts to requiring that this worst-case finite-time behavior be realized by an initial datum. This is precisely the initial datum relevant to the optimization of the finite-time decay.
	
	More importantly, the initial datum realizing \eqref{08.31.8} or \eqref{08.31.9} is allowed to depend on $T$. Thus, for a sequence $T_n\to+\infty$, one may choose, for each $n$, an initial datum $\Phi_n^0\in\mathcal H$ satisfying
	\begin{equation*}
		\|\Phi_n^0\|_{\mathcal H}=1,
		\quad
		\|S_V(T_n)\Phi_n^0\|_{\mathcal H}
		=
		\|S_V(T_n)\|_{\mathcal L(\mathcal H)}.
	\end{equation*}
	There is no need for the same initial datum to realize the norm for different values of $T_n$. In this sense, the finite-time functionals
	\begin{equation*}
		J_{T_n}(V)
		=
		-\frac12
		\|S_V(T_n)\Phi_n^0\|_{\mathcal H}^2
	\end{equation*}
	provide a sequence of finite-time realizations of the asymptotic decay rate. Since $T_n\to+\infty$, Lemma~\ref{08.28.L1} shows that these finite-time decay rates converge to $\omega(V)$.
	
Therefore, the attainability assumption \eqref{08.31.8} or \eqref{08.31.9} should be understood as a finite-time worst-case assumption rather than as an additional characterization of the asymptotic decay rate. It allows us to replace the operator norm by the evolution of a suitable initial state and thereby formulate the decay optimization in terms of the finite-time functional $J_T(V)$. This formulation is particularly useful for deriving first-order optimality conditions and, in particular, the bang-bang structure of optimal damping potentials.
	
We emphasize that the argument does not require the existence of a single initial datum that realizes the asymptotic decay rate for all times. It is sufficient to consider a sequence $T_n\to+\infty$ and, for each $T_n$, a corresponding realizing initial datum $\Phi_n^0$. The resulting sequence of finite-time problems captures the asymptotic quantity $\omega(V)$ through~\eqref{08.31.11}.
\end{remark}

\begin{theorem}[First-order necessary condition for a finite-time optimal damping]\label{08.29.T2}
Let $T>0$ be fixed and let $V_T^*\in\mathcal S(H,K;\beta)$  be an optimal potential for the finite-time optimization problem
\begin{equation*}
	J_T(V_T^*)		=		\sup_{V\in\mathcal S(H,K;\beta)}J_T(V),
\end{equation*}
where $J_T(V)$ is defined in \eqref{08.29.2}.  Assume that there exists
\begin{equation*}
\Phi_T^0=(\varphi_T^0,\varphi_T^1)\in D(\mathfrak A),
\quad
	\|\Phi_T^0\|_{\mathcal H}=1,
\end{equation*}
such that
\begin{equation}\label{08.29.11}
		J_T(V_T^*)
		=
		-\frac12
		\|S_{V_T^*}(T)\Phi_T^0\|_{\mathcal H}^2.
	\end{equation}
	Let
	\[
	\Phi_T^*(t)
	=
	S_{V_T^*}(t)\Phi_T^0
	=
	\begin{pmatrix}
		\varphi_T^*(t)\\
		\partial_t\varphi_T^*(t)
	\end{pmatrix}
	\]
	be the corresponding state. Let $p_T$ be the solution of the adjoint problem
	\begin{equation}\label{08.29.12}
		\begin{cases}
			\partial_{tt}p_T-\Div(A\nabla p_T)
			-V_T^*\partial_t p_T=0,
			&\mbox{in }\Omega\times(0,T),\\
			p_T=0,
			&\mbox{on }\partial\Omega\times(0,T),\\
			p_T(T)=\partial_t\varphi_T^*(T),
			&\mbox{in }\Omega,\\
			\partial_t p_T(T)
			=
			-\mathcal A\varphi_T^*(T)
			+
			V_T^*\partial_t\varphi_T^*(T).
		\end{cases}
	\end{equation}
	Define a function 
	\begin{equation}\label{08.29.13}
		G_T(x)
		:=
		\int_0^T
		\partial_t\varphi_T^*(x,t)p_T(x,t)\df t.
	\end{equation}
	Then
	\begin{equation}\label{08.29.14}
		\int_\Omega
		G_T(x)\bigl(V(x)-V_T^*(x)\bigr)\df x
		\leq0,
		\mbox{ for all } V\in\mathcal S(H,K;\beta).
	\end{equation}
	Moreover, there exists $\lambda_T\in\mathbb R$ such that
	\begin{equation}\label{08.29.15}
		\begin{cases}
			V_T^*(x)=K,
			&\mbox{for }  G_T(x)>\lambda_T,\\[1mm]
			V_T^*(x)=H,
			&\mbox{for } G_T(x)<\lambda_T,\\[1mm]
			H\leq V_T^*(x)\leq K,
			&\mbox{for }  G_T(x)=\lambda_T. 
		\end{cases} 
	\end{equation}
	In particular, if
	\begin{equation*}
		\left|
		\left\{
		x\in\Omega:G_T(x)=\lambda_T
		\right\}
		\right|=0,
	\end{equation*}
	then
	\begin{equation}\label{08.29.16}
		V_T^*(x)\in\{H,K\}, \mbox{ a.e.~in }\Omega.
	\end{equation}
\end{theorem}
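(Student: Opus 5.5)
The plan is to compare $V_T^*$ with convex perturbations along one fixed initial datum, compute the derivative of the terminal energy by an adjoint identity, and then read off the bang-bang structure from a measure-theoretic threshold argument.

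\textbf{Step 1: reduction to a fixed initial datum.} Fix $V\in\mathcal S(H,K;\beta)$ and set $V_\e=V_T^*+\e(V-V_T^*)$ for $\e\in[0,1]$. Since $\mathcal S(H,K;\beta)$ is convex, $V_\e\in\mathcal S(H,K;\beta)$. The operator norm dominates the action on the fixed datum $\Phi_T^0$, which has $\|\Phi_T^0\|_{\mathcal H}=1$. Combining this with optimality of $V_T^*$ and the attainability assumption \eqref{08.29.11} gives
\begin{equation*}
-\frac12\|S_{V_\e}(T)\Phi_T^0\|_{\mathcal H}^2\le J_T(V_\e)\le J_T(V_T^*)=-\frac12\|S_{V_T^*}(T)\Phi_T^0\|_{\mathcal H}^2 .
\end{equation*}
So the scalar function $j(\e):=-\frac12\|S_{V_\e}(T)\Phi_T^0\|_{\mathcal H}^2$ attains its maximum on $[0,1]$ at $\e=0$. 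Hence $j'(0^+)\le0$, provided the one-sided derivative exists.

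\textbf{Step 2: the linearized equation.} I would show that $(S_{V_\e}(t)\Phi_T^0-\Phi_T^*(t))/\e$ converges in $C([0,T];\mathcal H)$ to $\Psi=(\psi,\partial_t\psi)$. Here $\psi$ solves
\begin{equation*}
\partial_{tt}\psi+\mathcal A\psi+V_T^*\partial_t\psi=-(V-V_T^*)\partial_t\varphi_T^*,\qquad \psi(0)=\partial_t\psi(0)=0 .
\end{equation*}
The difference quotient minus $\Psi$ satisfies \eqref{08.30.1} with generator $\mathfrak A_{V_T^*}$. Its forcing is $-(V-V_T^*)(\partial_t\varphi_\e-\partial_t\varphi_T^*)$, whose $L^1(0,T;\mathcal H)$ norm is $O(\e)$ by the contraction property \eqref{08.24.7} and Duhamel's formula. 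This yields $j'(0)=-\langle\Phi_T^*(T),\Psi(T)\rangle_{\mathcal H}=-a(\varphi_T^*(T),\psi(T))-(\partial_t\varphi_T^*(T),\partial_t\psi(T))_{L^2(\Omega)}$.

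\textbf{Step 3: the adjoint identity.} Since $\Phi_T^0\in D(\mathfrak A)$, by \eqref{08.30.2} we have $\varphi_T^*(T)\in D(\mathcal A)$ and $\partial_t\varphi_T^*(T)\in H_0^1(\Omega;w)$. Therefore the terminal data of \eqref{08.29.12} lie in $\mathcal H$. After the time reversal $s=T-t$, problem \eqref{08.29.12} becomes a damped equation of the same type with generator $\mathfrak A_{V_T^*}$, so it is well posed with $p_T\in C([0,T];H_0^1(\Omega;w))\cap C^1([0,T];L^2(\Omega))$. Then $G_T\in L^1(\Omega)$, because $\partial_t\varphi_T^*,p_T\in C([0,T];L^2(\Omega))$, and $\int_\Omega G_T(V-V_T^*)\,\df x$ is well defined.

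Next, multiply the $\psi$-equation by $p_T$ and integrate over $\Omega\times(0,T)$. Integrate by parts twice in $t$ and use the symmetry of $a(\cdot,\cdot)$, the adjoint equation, and $\psi(0)=\partial_t\psi(0)=0$. This gives
\begin{equation*}
-\int_0^T\!\!\int_\Omega (V-V_T^*)\partial_t\varphi_T^*p_T\,\df x\df t=(\partial_t\psi(T),p_T(T))_{L^2(\Omega)}-(\psi(T),\partial_tp_T(T)-V_T^*p_T(T))_{L^2(\Omega)} .
\end{equation*}
Inserting the terminal data of \eqref{08.29.12}, the $V_T^*$ terms cancel. The right-hand side becomes $(\partial_t\psi(T),\partial_t\varphi_T^*(T))_{L^2(\Omega)}+a(\varphi_T^*(T),\psi(T))=-j'(0)$. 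Hence $j'(0)=\int_\Omega G_T(V-V_T^*)\,\df x\le0$, which is \eqref{08.29.14}. These formal manipulations are justified first for regularized data (strong solutions) and then by passing to the limit.

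\textbf{Step 4: the bang-bang structure.} Define $\lambda_T:=\inf\{\lambda\in\mathbb R:|\{G_T>\lambda\}\cap\{V_T^*<K\}|=0\}$. Suppose, for some threshold, both $\{G_T>\lambda_T+\delta\}\cap\{V_T^*<K\}$ and $\{G_T<\lambda_T-\delta\}\cap\{V_T^*>H\}$ had positive measure. Then I would move a small equal amount of mass from the second set to the first, which keeps the average $\beta$ and the bounds $H\le V\le K$. This produces an admissible $V$ with $\int_\Omega G_T(V-V_T^*)\,\df x>0$, contradicting \eqref{08.29.14}. Letting $\delta\downarrow0$ and handling the endpoint cases of the infimum then gives \eqref{08.29.15}; \eqref{08.29.16} follows immediately from it.

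\textbf{Main obstacle.} I expect the hardest part to be the rigorous differentiability in Step 2 together with the duality identity in Step 3, since $p_T$ has only energy regularity. Integrating by parts directly in $\mathcal H$ is not justified at that level. My plan is to regularize $p_T$ by approximating its terminal data in $D(\mathfrak A)$, use the bound \eqref{08.31.1} to get strong solutions, carry out the integration by parts for those, and pass to the limit using the continuity of the Duhamel formula.
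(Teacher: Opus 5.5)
Your Steps 2--3 follow the paper's proof closely and are, if anything, more carefully justified. Step 4 replaces the bathtub principle by a direct exchange argument. The genuine gap is in Step 1, which everything else rests on.

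The first inequality in your chain is reversed. Since $\|\Phi_T^0\|_{\mathcal H}=1$, we have $\|S_{V_\varepsilon}(T)\Phi_T^0\|_{\mathcal H}\le\|S_{V_\varepsilon}(T)\|_{\mathcal L(\mathcal H)}$, hence
\[
j(\varepsilon)=-\tfrac12\|S_{V_\varepsilon}(T)\Phi_T^0\|_{\mathcal H}^2\ \ge\ J_T(V_\varepsilon)\ \le\ J_T(V_T^*)=j(0),
\]
which gives no comparison between $j(\varepsilon)$ and $j(0)$. This is not a repairable typo. The problem is of min--max type: $V_T^*$ minimizes $\sup_{\|\Phi\|_{\mathcal H}=1}\|S_V(T)\Phi\|_{\mathcal H}$. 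Optimality of $V_T^*$ only says that for each $V_\varepsilon$ \emph{some} datum, possibly different from $\Phi_T^0$, does at least as badly. When the worst-case datum is not unique, $j(\varepsilon)\le j(0)$ can fail. Take $M(x)=\mathrm{diag}(1+x,1-x)$ on $[-1,1]$: the norm $1+|x|$ is minimal at $x=0$ and is attained there at $e_1$. Yet in the admissible direction $-1$ one has $\|M(-\varepsilon)e_1\|=1-\varepsilon$, so $j(\varepsilon)=-\tfrac12(1-\varepsilon)^2>j(0)$. The paper's proof asserts the same inequality (``by the optimality of $V_T^*$ and \eqref{08.29.11}'') without justification, so your proposal inherits the paper's gap rather than closing it.

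Closing the gap needs an extra hypothesis and a Danskin/Kato-type argument. Suppose $\|S_{V_T^*}(T)\|_{\mathcal L(\mathcal H)}^2$ is a simple isolated eigenvalue of $S_{V_T^*}(T)^*S_{V_T^*}(T)$, so $\Phi_T^0$ is the unique maximizer up to sign. The map $\varepsilon\mapsto S_{V_\varepsilon}(T)$ is differentiable in $\mathcal L(\mathcal H)$, because the perturbation $(u,v)\mapsto(0,(V-V_T^*)v)$ is bounded on $\mathcal H$. Hence the top eigenvalue $\|S_{V_\varepsilon}(T)\|^2$ is differentiable at $\varepsilon=0$ with derivative $-2j'(0)$. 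Its minimality at $\varepsilon=0$ then gives $j'(0)\le0$, and your Steps 2--4 go through. Without simplicity, but with a spectral gap, you only get, for each $V$, some maximizer in the top eigenspace for which \eqref{08.29.14} holds. A minimax argument then gives a fixed convex combination of the corresponding $G_T$'s valid for all $V$, not the $G_T$ built from an arbitrary attaining $\Phi_T^0$.

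A small fix in Step 4: with your definition of $\lambda_T$, the set $\{G_T>\lambda_T+\delta\}\cap\{V_T^*<K\}$ is automatically null. The exchange should use $\{G_T>\lambda_T-\delta/2\}\cap\{V_T^*<K\}$, which has positive measure by definition of the infimum, against $\{G_T<\lambda_T-\delta\}\cap\{V_T^*>H\}$. Finiteness of $\lambda_T$ follows from $H<\beta<K$.
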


\begin{proof}
	We prove this theorem by the following steps. 
	
	{\it Step 1}. 
	Let $V\in\mathcal S(H,K;\beta)$ be arbitrary and define
	\begin{equation*}
		V_\e
		=
		V_T^*+\e(V-V_T^*),
		\quad
		0\leq\e\leq1.
	\end{equation*}
	Since $\mathcal S(H,K;\beta)$ is convex, we have $
	V_\varepsilon\in\mathcal S(H,K;\beta)$. 
	
	{\it Step 2}. 
	Let $\varphi_\e$ denote the solution of
	\eqref{08.12.1} corresponding to $V_\e$ and the fixed
	initial datum $\Phi_T^0$. Set
	\begin{equation*}
	z_T
	=
	\left.
	\frac{\df}{\df\e}\right|_{\e=0}\varphi_\e
	:=\lim_{\e\ra 0^+}\f{\vp_\e-\vp_T^{*}}{\e}.
	\end{equation*}
	Then $z_T$ satisfies the linearized equation
	\begin{equation}\label{08.29.17}
		\begin{cases}
			\partial_{tt}z_T-\Div(A\nabla z_T)
			+V_T^*\partial_tz_T
			=
			-(V-V_T^*)\partial_t\varphi_T^*,
			&\mbox{in }\Omega\times(0,T),\\
			z_T=0,
			&\mbox{on }\partial\Omega\times(0,T),\\
			z_T(0)=0, 
			\partial_tz_T(0)=0,
			&\mbox{in }\Omega.
		\end{cases}
	\end{equation}
	
	By the optimality of $V_T^*$ and \eqref{08.29.11},
	\begin{equation*}
	J_T(V_\e,\Phi_T^0)
	\leq
	J_T(V_T^*,\Phi_T^0),
	\mbox{ for all }
	0\leq\e\leq1.
	\end{equation*}
	Hence,
	\begin{equation}\label{08.29.18}
		\left.
		\frac{\df}{\df \e}\right|_{\e=0}
		J_T(V_\e,\Phi_T^0) 
		\leq0.
	\end{equation} 
	By differentiating the terminal functional, we obtain
	\begin{equation}\label{08.29.19}
		\left.
		\frac{\df }{\df \e}\right|_{\e=0}
		J_T(V_\e,\Phi_T^0)
		=
		-a(\vp_T^*(T), z_T(T))
		-
		\bigl(\partial_t\varphi_T^*(T),
		\partial_tz_T(T)\bigr)_{L^2(\Omega)}.
	\end{equation}
	
	{\it Step 3}. 
	Let $p_T$ be the solution of the adjoint problem \eqref{08.29.12}. 
	Multiplying \eqref{08.29.17} by $p_T$ and integrating over
	$\Omega\times(0,T)$, we obtain
	\begin{align*}
		&\iint_Q 
		\left(
		\partial_{tt}z_T\,p_T
		+
		A\nabla z_T\cdot\nabla p_T
		+
		V_T^*\partial_tz_T\,p_T
		\right)\df x\df t 
		=
		-\iint_Q 
		(V-V_T^*)\partial_t\varphi_T^*p_T\df x\df t.
	\end{align*} 
	Integrating by parts with respect to time and using the adjoint
	equation \eqref{08.29.12}, together with $
	z_T(0)=\partial_tz_T(0)=0$, 
	we obtain  
	\begin{align*}
		&\bigl(
		\partial_tz_T(T),p_T(T)
		\bigr)_{L^2(\Omega)}
		-
		\bigl(
		z_T(T),\partial_t p_T(T)
		\bigr)_{L^2(\Omega)}
		+
		\int_\Omega
		V_T^*z_T(T)p_T(T)\df x
		\\
		&\qquad
		=
		-\iint_Q 
		(V-V_T^*)\partial_t\varphi_T^*p_T\df x\df t.
	\end{align*}  
	Since $
	p_T(T)=\partial_t\varphi_T^*(T)$ 
	and $
	-\partial_t p_T(T)
	+
	V_T^*p_T(T)
	=
	\mathcal A\varphi_T^*(T)$ with  $\vp_T^*(T)\in D(\mcA)$, 
	the left-hand side above becomes
	\begin{align*}
		&\bigl(
		\partial_tz_T(T),\partial_t\varphi_T^*(T)
		\bigr)_{L^2(\Omega)}
		+
		\bigl(
		z_T(T),\mathcal A\varphi_T^*(T)
		\bigr)_{L^2(\Omega)}
		\\
		&
		=
		\bigl(
		\partial_tz_T(T),\partial_t\varphi_T^*(T)
		\bigr)_{L^2(\Omega)}
		+
		a\bigl(
		z_T(T),\varphi_T^*(T)
		\bigr).
	\end{align*}
	Consequently,
	\begin{equation*}
		\begin{aligned}
			&
			\bigl(
			\partial_tz_T(T),\partial_t\varphi_T^*(T)
			\bigr)_{L^2(\Omega)}
			+
			a\bigl(
			z_T(T),\varphi_T^*(T)
			\bigr) 
			=
			-\iint_Q 
			(V-V_T^*)\partial_t\varphi_T^*p_T\df x\df t.
		\end{aligned}
	\end{equation*}
	Together with this and \eqref{08.29.19}, we obtain 
	\begin{equation}\label{08.29.21}
		\left.
		\frac{\df }{\df \e}\right|_{\e=0}
		J_T(V_\e,\Phi_T^0) 
		=
		\iint_Q 
		(V-V_T^*)\partial_t\varphi_T^*p_T\df x\df t.
	\end{equation} 
	
	{\it Step 4}. 
	Combining \eqref{08.29.18} and \eqref{08.29.21}, we obtain
	\begin{equation*}
	\iint_Q 
	(V-V_T^*)\partial_t\varphi_T^*p_T\df x\df t
	\leq0.
	\end{equation*}
	Defining
	\begin{equation*}
	G_T(x)
	=
	\int_0^T
	\partial_t\varphi_T^*(x,t)p_T(x,t)\df t,
	\end{equation*}
	we arrive at
	\begin{equation*}
		\int_\Omega
		G_T(x)(V(x)-V_T^*(x))\df x
		\leq0,
		\mbox{ for all }  V\in\mathcal S(H,K;\beta).
	\end{equation*}
This yields \eqref{08.29.14}. 
	
{\it Step 5}. It remains to characterize $V_T^*$. 
	
Since
\begin{equation*}
\int_\Omega V_T^*\df x = \int_\Omega V\df x = \beta|\Omega|,
\end{equation*}   
the first-order variational inequality \eqref{08.29.14} is equivalent to 
\begin{equation*}
		\int_\Omega G_TV_T^*\df x
		=
		\max_{V\in\mathcal S(H,K;\beta)}
		\int_\Omega G_TV\df x.
	\end{equation*}
By the bathtub principle \cite[Theorem 1.14, p.~28]{Lieb}, there exists $\lambda_T\in\mathbb R$ such that
	\begin{equation*}
		\begin{cases}
			V_T^*(x)=K,
			&\mbox{for a.e. }x\in\{G_T>\lambda_T\},\\[1mm]
			V_T^*(x)=H,
			&\mbox{for a.e. }x\in\{G_T<\lambda_T\},\\[1mm]
			H\leq V_T^*(x)\leq K,
			&\mbox{for a.e. }x\in\{G_T=\lambda_T\}.
		\end{cases}
	\end{equation*}
	This proves \eqref{08.29.15}. 
	
	{\it Step 6}. 
	If
	\begin{equation*}
	\left|
	\{x\in\Omega\colon G_T(x)=\lambda_T\}
	\right|=0,
	\end{equation*}
	then
	\begin{equation*}
	V_T^*(x)\in\{H,K\} \mbox{ for a.e.}~x\in\Omega,
	\end{equation*}
	which proves \eqref{08.29.16}.  This completes the proof of the  theorem. 
\end{proof}

\begin{remark}[Existence of a bang-bang optimal potential]
	\label{08.29.R3}
	The bang-bang conclusion does not require the level set
	$\{G_T=\lambda_T\}$ to have zero measure.  Indeed, if
	\begin{equation*}
		\left|
		\left\{
		x\in\Omega:G_T(x)=\lambda_T
		\right\}
		\right|>0,
	\end{equation*}
	the first-order optimality condition determines
	$V_T^*=K$ on $\{G_T>\lambda_T\}$ and
	$V_T^*=H$ on $\{G_T<\lambda_T\}$, while on the level set
	\begin{equation*}
		E_T:=\{G_T=\lambda_T\},
	\end{equation*}
	the value of $V_T^*$ may vary within $[H,K]$ without changing
	the value of the linear functional $
	V\mapsto\int_\Omega G_TV\df x$, 
	provided that the integral constraint is preserved.
	
	More precisely, let
	\begin{equation*}
		m_T
		:=
		\frac{1}{|E_T|}
		\int_{E_T}V_T^*(x)\df x\in[H,K].
	\end{equation*}
	By the nonatomicity of the Lebesgue measure, there exists a
	measurable set $E\s E_T$ such that $
	|E|
	=
	\frac{m_T-H}{K-H}|E_T|$. 
	Define
	\begin{equation*}
		\widetilde V_T^*(x)
		=
		\begin{cases}
			K,
			&x\in\{G_T>\lambda_T\}\cup E,\\
			H,
			&x\in\{G_T<\lambda_T\}\cup(E_T- E).
		\end{cases}
	\end{equation*}
	Then
	\begin{equation*}
		\widetilde V_T^*\in\mathcal S(H,K;\beta),
		\quad
		\widetilde V_T^*(x)\in\{H,K\} \mbox{ a.e.~in }\Omega.
	\end{equation*}
	Indeed,
	\begin{align*}
		\int_{E_T}\widetilde V_T^*\df x
		&=
		K|E|+H(|E_T|-|E|) =
		m_T|E_T|
		=
		\int_{E_T}V_T^*(x)\df x.
	\end{align*}  
	Since $\widetilde V_T^*=V_T^*$ a.e.~in $\Omega- E_T$, we have $
	\int_\Omega\widetilde V_T^*\df x
	=
	\int_\Omega V_T^*\df x$. 
	Therefore, together with $
	H\leq\widetilde V_T^*\leq K \mbox{ a.e.~in }\Omega$, 
	we obtain
	\begin{equation*}
		\widetilde V_T^*\in\mathcal S(H,K;\beta).
	\end{equation*} 
	
	Moreover, since $G_T=\lambda_T$ a.e.~on $E_T$,
	\begin{align*}
		\int_{E_T}G_T\widetilde V_T^*\df x
		&=
		\lambda_T\int_{E_T}\widetilde V_T^*\df x =
		\lambda_T\int_{E_T}V_T^*\df x
		=
		\int_{E_T}G_TV_T^*\df x.
	\end{align*}
Outside $E_T$, the functions $\widetilde V_T^*$ and $V_T^*$ coincide. Consequently,
\begin{equation*}
		\int_\Omega G_T\widetilde V_T^*\df x
		=
		\int_\Omega G_TV_T^*\df x.
\end{equation*}
Thus, $\widetilde V_T^*$ is also an optimal potential. Therefore, even when $\left|\{G_T=\lambda_T\}\right|>0$, there always exists at least one bang-bang optimal potential.  However, the optimal potential need not be uniquely bang-bang on the level set $\{G_T=\lambda_T\}$.
\end{remark}

\section{Conclusion}
This work establishes a complete stabilization framework for degenerate hyperbolic systems by overcoming core boundary degeneracy and non-self-adjointness challenges. Furthermore, it uncovers a fundamental bang-bang principle governing optimal spatial damping distributions. Future research can build on these findings by further determining whether infinite-time horizons preserve this bang-bang structure. The framework can also be expanded to strongly degenerate or non-Euclidean systems. Additionally, future models can incorporate nonlinear or boundary dissipation mechanisms. Finally, developing numerical level set algorithms will enable the computation and mapping of optimal damping regions.

\section*{Declarations}
The authors have not disclosed any competing interests and data availability.

\end{document}